\newtheorem{theorem}{Theorem}[section]
\newtheorem{lemma}{Lemma}[section]
\newtheorem{remark}{Remark}[section]
\newtheorem{corollary}{Corollary}[section]
\newtheorem{example}{Example}[section]
\newtheorem{proposition}{Proposition}[section]
\numberwithin{equation}{section}
\begin{document}

\title{Operator inequalities via accretive  transforms}
\author{Ibrahim Halil G\"um\"u\c s,  Hamid Reza Moradi and Mohammad Sababheh}
\subjclass[2010]{Primary 47A63, 15A60, Secondary 46L05, 47A60, 47A12, 47A30.}
\keywords{Numerical radius, norm inequality, accretive operators.}
\maketitle

\begin{abstract}
In this article, we employ certain properties of the transform $C_{M,m}(A)=(MI-A^*)(A-mI)$ to obtain new inequalities for the bounded linear operator $A$ on a complex Hilbert space $\mathcal{H}$. In particular, we obtain new relations among $|A|,|A^*|,|\mathfrak{R}A|$ and $|\mathfrak{I}A|$. Further numerical radius inequalities that extend some known inequalities will be presented too.
\end{abstract}
\pagestyle{myheadings}
\markboth{\centerline {}}
{\centerline {}}
\bigskip
\bigskip
\section{Introduction}
While studying inequalities of Kantorovich type, Dragomir \cite{1} defined the  transform $C_{M,m}:\mathcal{B}(\mathcal{H})\to \mathcal{B}(\mathcal{H})$ by
$$C_{M,m}(A)=(MI-A^*)(A-mI),$$ where $M>m>0$ are predefined real numbers, $\mathcal{B}(\mathcal{H})$ is the $C^*-$algebra of all bounded linear operators on a complex Hilbert space $\mathcal{H}$, $I$ is the identity operator in $\mathcal{B}(\mathcal{H})$ and $A^*$ is the adjoint of $A\in\mathcal{B}(\mathcal{H})$.

Basic properties and applications of $C_{M,m}$ were presented in \cite{1}. Later, Niezgoda \cite{2} used this transform to obtain certain Cassel- type inequalities.

Our primary goal in this work is to use the transform $C_{M,m}$ to obtain new operator inequalities that involve relations among $|A|,|A^*|, \mathfrak{R}A$ and $\mathfrak{I}A$, where $|A|=(A^*A)^{\frac{1}{2}}$ and $\mathfrak{R}A$ and $\mathfrak{I}A$ refer respectively to the real and imaginary parts of the operator $A$. Then new forms of numerical radius inequalities are found using this transform.

To this end, we need to remind the reader of some terminologies. Recall that an operator $A\in\mathcal{B}(\mathcal{H})$ is said to be positive semi-definite if $\left<Ax,x\right>\geq 0$ for all vectors $x\in\mathcal{H}$ and we write $A\geq 0$, while it is said to be positive (or positive definite) if $\left<Ax,x\right> >0$ for all nonzero $x\in\mathcal{H},$ and we write $A>0$. The real and imaginary parts of the operator $A$ are defined respectively by $\mathfrak{R}A=\frac{A+A^*}{2}$ and $\mathfrak{I}A=\frac{A-A^*}{2\textup i}.$ Further, $A$ is said to be accretive (dissipative) if $\mathfrak{R}A\geq 0$ ($\mathfrak{I}A\geq 0$).  If $\mathfrak{R}A,\mathfrak{I}A\geq 0$, then $A$ is said to be accretive-dissipative. Accretive, dissipative, and accretive-dissipative operators have received considerable attention in the literature due to their applicability in operator theory and its inequalities. We refer the reader to \cite{bed_pos,bed_results,Drury1,drury,7,Lin 1,raissouli} as a list of references dealing with such operators.

Our approach here will assume accretivity or dissipativity of the transform $C_{M,m}.$ For this, we begin by presenting simple properties of this transform for this context.

\begin{proposition}\label{prop_properties_C}
Let $m<M$ be given real numbers and let $C_{M,m}:\mathcal{B}(\mathcal{H})\to \mathcal{B}(\mathcal{H})$ be the transform $C_{M,m}(A)=(MI-A^*)(A-mI)$. Then
\begin{equation}
         {C_{M,m}}\left( {{A}^{*}} \right)=C_{M,m}^{*}\left( A \right)\quad\Leftrightarrow \quad A\text{ is normal}.     \tag{1}
\end{equation}

\begin{equation}
{C_{M,m}}\left( A \right)=C_{M,m}^{*}\left( A \right)\quad\Leftrightarrow \quad A\text{ is self-adjoint}. \tag{2}
\end{equation}

\begin{equation}
{C_{M,m}}\left( \left| A \right| \right)\text{ is accretive }\Leftrightarrow m{I}\le \left| A \right|\le M{I}. \tag{3}
\end{equation} 

\begin{equation}
\mathfrak R{C_{M,m}}\left( \textup i{{A}^{*}} \right),\mathfrak R{C_{M,m}}\left( A \right)\le {{\left( \frac{M-m}{2} \right)}^{2}}{I}. \tag{4}
\end{equation} 

\begin{equation}
\mathfrak I{C_{M,m}}\left( A \right)\ge 0 \; if\; and \;only\; if\; \mathfrak IA\ge 0. \tag{5}
\end{equation}

\begin{equation}
If \;\mathfrak R {C_{M,m}}\left( A \right)\ge 0,\; then \;\mathfrak RA\ge 0. \tag{6}
\end{equation}

\begin{equation}
If\; {C_{M,m}}\left( A \right)\; is\; accretive-dissipative,\; then \;A\; is\; accretive-dissipative. \tag{7}
\end{equation}

\end{proposition}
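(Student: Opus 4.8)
The plan is to read statement (7) straight off items (5) and (6). By definition, an operator $B$ is accretive-dissipative precisely when $\mathfrak R B\ge0$ and $\mathfrak I B\ge0$, so the hypothesis that $C_{M,m}(A)$ is accretive-dissipative unpacks into the two relations $\mathfrak R C_{M,m}(A)\ge0$ and $\mathfrak I C_{M,m}(A)\ge0$. Applying (6) to the first relation yields $\mathfrak R A\ge0$, while applying the ``only if'' half of (5) to the second yields $\mathfrak I A\ge0$. Taken together, these say exactly that $A$ is accretive-dissipative, and nothing more is needed: once (5) and (6) are in hand, (7) is a one-line deduction.

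For completeness I would also indicate how (5) and (6) themselves are proved, since that is where the content of (7) actually lies. Expanding the product,
$$C_{M,m}(A)=-A^*A+MA+mA^*-mMI ,$$
so that $C_{M,m}(A)-C_{M,m}(A)^*=(M-m)(A-A^*)$ and $C_{M,m}(A)+C_{M,m}(A)^*=-2A^*A+(M+m)(A+A^*)-2mMI$. Dividing by $2\textup i$ and by $2$ respectively gives the two identities
$$\mathfrak I C_{M,m}(A)=(M-m)\,\mathfrak I A ,\qquad \mathfrak R C_{M,m}(A)=-\,|A|^{2}+(M+m)\,\mathfrak R A-mMI .$$
Since $M-m>0$, the first identity is equivalent to statement (5) in both directions at once.

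For (6) I would test $\mathfrak R C_{M,m}(A)\ge0$ against a unit vector $x$ and put $t=\langle \mathfrak R A\,x,x\rangle$; the second identity then gives $(M+m)\,t\ge\|Ax\|^{2}+mM$. Combining this with $\|Ax\|^{2}\ge|\langle Ax,x\rangle|^{2}\ge t^{2}$ (Cauchy--Schwarz, then discarding the contribution of $\mathfrak I A$ in $\langle Ax,x\rangle$) produces $(t-m)(t-M)\le0$, whence $m\le t\le M$ for every unit $x$, i.e. $\mathfrak R A\ge mI\ge0$. This last chain is the only genuinely one-sided step in the whole proposition, and it is the crux: replacing $\|Ax\|^{2}$ by $t^{2}$ throws away the imaginary part, which is exactly why the implications in (6), and hence in (7), cannot be reversed. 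Beyond this estimate there is no real obstacle; granted (5) and (6), statement (7) is immediate.
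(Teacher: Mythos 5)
Your treatment of items (5), (6) and (7) is correct and, for (5) and (7), follows exactly the paper's route: the paper also derives (5) from the identity $\mathfrak I C_{M,m}(A)=(M-m)\mathfrak I A$ and obtains (7) by combining (5) with (6). For (6) the paper only says it ``follows from the definition of $C_{M,m}$ likewise''; the intended one-line argument is that $\mathfrak R C_{M,m}(A)\ge 0$ rewrites as $(M+m)\mathfrak R A\ge MmI+{\left| A \right|}^{2}\ge 0$ (this is precisely the inequality that reappears as \eqref{04} in the proof of Theorem \ref{00}), which gives $\mathfrak R A\ge 0$ at once since $M+m>0$. Your Cauchy--Schwarz detour through $(t-m)(t-M)\le 0$ is valid and even yields the stronger conclusion $\mathfrak R A\ge mI$, but it is more work than needed. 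Note that both your argument and the short one rely on the standing assumption $M>m>0$ from the introduction: for arbitrary real $m<M$ statement (6) is false (take $A=-I$, $m=-2$, $M=1$; then $C_{M,m}(A)=2I$ is accretive while $\mathfrak R A=-I$).

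The genuine gap is one of coverage: the statement under review is the whole of Proposition \ref{prop_properties_C}, and your proposal establishes only items (5)--(7), leaving (1)--(4) unaddressed. None of these is deep, but they do require separate computations: (1) follows from $C_{M,m}^{*}(A)-C_{M,m}(A^{*})={\left| A^{*} \right|}^{2}-{\left| A \right|}^{2}$, (2) from $C_{M,m}(A)-C_{M,m}^{*}(A)=(M-m)(A-A^{*})$ (an identity you already derived but did not exploit for this purpose), (3) from the equivalence $(MI-\left| A \right|)(\left| A \right|-mI)\ge 0\Leftrightarrow mI\le \left| A \right|\le MI$ for the self-adjoint operator $\left| A \right|$, and (4) from the completed-square identity $\mathfrak R C_{M,m}(A)+{\left| A-\tfrac{M+m}{2}I \right|}^{2}={\left( \tfrac{M-m}{2} \right)}^{2}I$ applied to $A$ and to $\textup{i}A^{*}$. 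As written, the proposal therefore does not prove the proposition, even though everything it does assert is correct.
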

\begin{proof}
Statement (1) follows, noting that
\[C_{M,m}^{*}\left( A \right)-{C_{M,m}}\left( {{A}^{*}} \right)={{\left| {{A}^{*}} \right|}^{2}}-{{\left| A \right|}^{2}},\]
while (2) follows immediately from
\[{C_{M,m}}\left( A \right)-C_{M,m}^{*}\left( A \right)=\left( M-m \right)\left( A-{{A}^{*}} \right).\]
The third statement follows from the following fact \[\left( M{I}-\left| A \right| \right)\left( \left| A \right|-m{I} \right)\ge 0\Leftrightarrow m{I}\le \left| A \right|\le M{I}.\]

 It is not hard to check that
 \begin{equation}\label{5}
\mathfrak R{C_{M,m}}\left( A \right)+{{\left| A-\frac{M+m}{2}{I} \right|}^{2}}={{\left( \frac{M-m}{2} \right)}^{2}}{I},
 \end{equation}
and
\[\mathfrak R{C_{M,m}}\left( \textup i{{A}^{*}} \right)+{{\left| \textup i{{A}^{*}}-\frac{M+m}{2}{I} \right|}^{2}}={{\left( \frac{M-m}{2} \right)}^{2}}{I},\]
which together imply (4). On the other hand, direct calculations show that 
\[\mathfrak I{C_{M,m}}\left( A \right)=\left( M-m \right)\mathfrak IA\] which implies (5), and (6) follows from the definition of $C_{M,m}$ likewise. The last statement (7) follows from both (5) and (6).

\end{proof}
Having established these fundamental properties, we proceed to our main results in the coming sections, where operator inequalities are discussed first, then numerical radius inequalities are presented.

\section{Operator Inequalities}
In this section, we present several operator inequalities using properties of the transform $C_{M,m}$. In particular, this section's results will focus on relations among  $|A|,|A^*|,\mathfrak{R}A$,  and $\mathfrak{I}A$. We should remark that, in general, such relations do not exist. However, by imposing an extra condition on $C_{M,m}$, we obtain such ties.\\
We notice the appearance of the constant $\frac{M+m}{2\sqrt{Mm}}$, which is the ratio between the arithmetic and geometric means of $M$ and $m$. This constant is, in fact, the square root of the well-known Kantorovich constant. In the sequel, $m$ and $M$ are positive numbers.
\begin{theorem}\label{00}
Let $A\in \mathcal B\left( \mathcal H \right)$.
\begin{itemize}
\item[(i)]  If ${C_{M,m}}\left( A \right)$ is accretive, then
\begin{equation}\label{0}
\left| A \right|\le \frac{M+m}{2\sqrt{Mm}}\mathfrak RA.
\end{equation}

\item[(ii)]  If ${C_{M,m}}\left(\textup i{{A}^{*}} \right)$ is accretive, then
 \[\left| {{A}^{*}} \right|\le \frac{M+m}{2\sqrt{Mm}}\mathfrak IA.\]
 
\item[(iii)]  If $A$ is invertible and  ${C_{M,m}}\left({{A}^{-1}} \right)$ is accretive, then
 \[\left| {{A}^{-1}} \right|\le \frac{M+m}{2\sqrt{Mm}}\mathfrak R{{A}^{-1}}.\]
\end{itemize}
\end{theorem}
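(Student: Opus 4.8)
The plan is to establish part (i) by a short computation and then deduce (ii) and (iii) by substituting $\textup iA^*$ and $A^{-1}$ for $A$.

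For (i), I would first make the real part of the transform explicit. Expanding $C_{M,m}(A)=-A^*A+MA+mA^*-MmI$ (equivalently, using identity \eqref{5} together with $\left|A-\frac{M+m}{2}I\right|^2=|A|^2-(M+m)\mathfrak RA+\frac{(M+m)^2}{4}I$ and the fact that $\left(\frac{M-m}{2}\right)^2-\frac{(M+m)^2}{4}=-Mm$), one gets
\[
\mathfrak R C_{M,m}(A)=(M+m)\,\mathfrak RA-|A|^2-MmI .
\]
The accretivity hypothesis $\mathfrak R C_{M,m}(A)\ge 0$ then says precisely that $|A|^2+MmI\le (M+m)\,\mathfrak RA$.

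Second, I would invoke the elementary operator arithmetic--geometric mean bound: from $\left(|A|-\sqrt{Mm}\,I\right)^2\ge 0$ we obtain $2\sqrt{Mm}\,|A|\le |A|^2+MmI$. Chaining the two displays gives $2\sqrt{Mm}\,|A|\le (M+m)\,\mathfrak RA$, and dividing through by the positive scalar $2\sqrt{Mm}$ yields \eqref{0}. The point I would stress is that this argument stays with the expression that is quadratic in $|A|$ and never passes to square roots, so operator monotonicity of $t\mapsto t^{1/2}$ is not needed.

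For (ii), the plan is to apply (i) with $A$ replaced by $\textup iA^*$: one checks $|\textup iA^*|=(AA^*)^{1/2}=|A^*|$ and $\mathfrak R(\textup iA^*)=\frac{\textup iA^*-\textup iA}{2}=\mathfrak IA$, so the conclusion of (i) turns into the asserted inequality. For (iii), apply (i) verbatim to the bounded operator $A^{-1}$. I do not expect a genuine obstacle here; the parts needing care are the bookkeeping in the identity for $\mathfrak R C_{M,m}(A)$ and the two short verifications $|\textup iA^*|=|A^*|$ and $\mathfrak R(\textup iA^*)=\mathfrak IA$ used to reduce (ii) to (i). As a sanity check, property (6) of Proposition \ref{prop_properties_C} already forces $\mathfrak RA\ge 0$ under the hypothesis of (i), so the right-hand side of \eqref{0} is indeed a positive operator.
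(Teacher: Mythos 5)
Your argument is correct and coincides with the paper's own proof: the authors likewise rewrite the accretivity hypothesis as $(M+m)\mathfrak RA\ge MmI+|A|^2$ (their inequality \eqref{04}), apply the operator arithmetic--geometric mean bound $MmI+|A|^2\ge 2\sqrt{Mm}\,|A|$, and obtain (ii) and (iii) by substituting $\textup iA^*$ and $A^{-1}$ for $A$. Your explicit verifications of $|\textup iA^*|=|A^*|$ and $\mathfrak R(\textup iA^*)=\mathfrak IA$ are accurate and merely spell out what the paper leaves implicit.
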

\begin{proof}
For the first statement, the assumption implies that $\mathfrak R {C_{M,m}}\left( A \right)\ge 0$. This is equivalent to saying
	\[\frac{\left( MA-Mm{I}-{{\left| A \right|}^{2}}+m{{A}^{*}} \right)+{{\left( MA-Mm{I}-{{\left| A \right|}^{2}}+m{{A}^{*}} \right)}^{*}}}{2}\ge 0.\]
Namely,
\begin{equation}\label{04}
\left( M+m \right)\mathfrak RA\ge Mm{I}+{{\left| A \right|}^{2}}.
\end{equation}
Applying the operator arithmetic-geometric mean inequality, we infer that
	\[Mm{I}+{{\left| A \right|}^{2}}\ge 2\sqrt{Mm}\left| A \right|.\]
Combining the last two inequalities, we get \eqref{0}.

To prove parts (ii) and (iii), we replace, in part (i), $A$ by $\textup i{{A}^{*}}$, and $A$ by ${{A}^{-1}}$, respectively.
\end{proof}
 
We note that the inequality \eqref{0} in Theorem \ref{00} has been given in \cite[Proposition 2.4]{2}, using a different method.

Theorem \ref{00} entails the following reverse of the triangle inequality.
\begin{theorem}\label{thm_block_1}
Let $S,T\in \mathcal B\left( \mathcal H \right)$. If ${C_{M,m}}\left(  \left[ \begin{matrix}
   0 & S  \\
   {{T}^{*}} & 0  \\
\end{matrix} \right]  \right)$ is accretive, then

\[\left\| S \right\|+\left\| T \right\|+\left| \; \left\| S \right\|-\left\| T \right\| \; \right|\le \frac{M+m}{2\sqrt{Mm}}\left\| S+T \right\|.\]
\end{theorem}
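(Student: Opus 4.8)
The plan is to apply Theorem \ref{00}(i) to the $2\times 2$ operator matrix $X=\left[\begin{matrix}0 & S\\ T^* & 0\end{matrix}\right]$ acting on $\mathcal H\oplus\mathcal H$, and then translate the resulting operator inequality $|X|\le \frac{M+m}{2\sqrt{Mm}}\,\mathfrak R X$ into the stated scalar inequality by taking norms. The hypothesis that $C_{M,m}(X)$ is accretive is exactly what the theorem needs, so the first step is immediate: we get $\||X|\|\le \frac{M+m}{2\sqrt{Mm}}\,\|\mathfrak R X\|$, using that $Y\le Z$ for positive operators implies $\|Y\|\le\|Z\|$.

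Next I would compute the two sides. Since $X^*=\left[\begin{matrix}0 & T\\ S^* & 0\end{matrix}\right]$, one finds $X^*X=\left[\begin{matrix}TT^* & 0\\ 0 & S^*S\end{matrix}\right]$, so $|X|=\left[\begin{matrix}|T^*| & 0\\ 0 & |S|\end{matrix}\right]$ and hence $\||X|\|=\max\{\||S|\|,\||T^*|\|\}=\max\{\|S\|,\|T\|\}$, using $\||S|\|=\|S\|$ and $\||T^*|\|=\|T^*\|=\|T\|$. For the real part, $\mathfrak R X=\frac12\left[\begin{matrix}0 & S+T\\ S^*+T^* & 0\end{matrix}\right]$, which is self-adjoint with square $\frac14\left[\begin{matrix}(S+T)(S+T)^* & 0\\ 0 & (S+T)^*(S+T)\end{matrix}\right]$, giving $\|\mathfrak R X\|=\frac12\|S+T\|$. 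Plugging these into the inequality from the previous step yields
\[
\max\{\|S\|,\|T\|\}\le \frac{M+m}{2\sqrt{Mm}}\cdot\frac{\|S+T\|}{2}=\frac{M+m}{4\sqrt{Mm}}\,\|S+T\|.
\]
Finally I would use the elementary identity $\|S\|+\|T\|+\big|\,\|S\|-\|T\|\,\big|=2\max\{\|S\|,\|T\|\}$ to rewrite the left side, obtaining exactly
\[
\|S\|+\|T\|+\big|\,\|S\|-\|T\|\,\big|\le \frac{M+m}{2\sqrt{Mm}}\,\|S+T\|.
\]

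The only genuine point to be careful about is the norm computation for the off-diagonal matrices: I should make sure the spectral bound $\|X\|$ really equals $\max\{\|S\|,\|T\|\}$ (via $X^*X$ being block diagonal) and similarly for $\mathfrak R X$; these are standard facts about $2\times2$ operator matrices but worth verifying cleanly. Everything else is routine algebra, so I do not anticipate a substantive obstacle — the content of the theorem is really just Theorem \ref{00}(i) specialized to a block operator, dressed up via the $\max$-to-sum identity.
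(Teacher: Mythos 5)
Your argument is correct and is essentially the paper's own proof: the paper likewise applies Theorem \ref{00}(i) to the block operator $A=\left[\begin{smallmatrix}0 & S\\ T^*&0\end{smallmatrix}\right]$, uses $\|S+T\|=\|A+A^*\|=2\|\mathfrak RA\|\ge \frac{4\sqrt{Mm}}{M+m}\|A\|=\frac{4\sqrt{Mm}}{M+m}\max(\|S\|,\|T\|)$, and finishes with the identity $\max\{a,b\}=\frac{a+b+|a-b|}{2}$. The only cosmetic difference is that you verify $\|A\|=\max(\|S\|,\|T\|)$ and $\|\mathfrak RA\|=\frac12\|S+T\|$ by explicit block computation, where the paper just cites the standard norm identities.
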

\begin{proof}
Let $A=\left[ \begin{matrix}
   0 & S  \\
   {{T}^{*}} & 0  \\
\end{matrix} \right]\in\mathcal{B}(\mathcal H\oplus \mathcal H)$. If ${C_{M,m}}\left( A  \right)$ is accretive, then
\[\begin{aligned}
   \left\| S+T \right\|&=\left\| A+{{A}^{*}} \right\| \\
      &=2\|\mathfrak{R}A\|\\ 
 & \ge \frac{4\sqrt{Mm}}{M+m}\left\| A \right\| \quad \text{(by \eqref{0},\;since\;$\|A\|=\|\;|A|\;\|$)}\\ 
 & =\frac{4\sqrt{Mm}}{M+m}\left\| \left[ \begin{matrix}
   0 & S  \\
   {{T}^{*}} & 0  \\
\end{matrix} \right] \right\| \\ 
 & =\frac{4\sqrt{Mm}}{M+m}\max \left( \left\| S \right\|,\left\| T \right\| \right)\quad\text{(since $\|X\|=\|\;|X^*|\;\|$)}.  
\end{aligned}\]
Noting the identity $\max\{a,b\}=\frac{a+b+|a-b|}{2}$,  the desired inequality follows.
\end{proof}

An upper bound of the difference $|A|-\mathfrak{R}A$ is given next.
\begin{corollary}
Let $A\in \mathcal B\left( \mathcal H \right)$.
\begin{itemize}
\item[(i)]  If ${C_{M,m}}\left( A \right)$ is accretive, then
 \[0\leq \left| A \right|-\mathfrak RA\le \frac{{{\left( \sqrt{M}-\sqrt{m} \right)}^{2}}}{M+m}\left\| A \right\|{I}.\]

\item[(ii)]  If ${C_{M,m}}\left(\textup i{{A}^{*}} \right)$ is accretive, then
\[0\leq \left| {{A}^{*}} \right|-\mathfrak IA\le \frac{{{\left( \sqrt{M}-\sqrt{m} \right)}^{2}}}{M+m}\left\| A \right\|{I}.\]
 
\item[(iii)]  If $A$ is invertible and ${C_{M,m}}\left({{A}^{-1}} \right)$ is accretive, then
\[0\leq \left| A^{-1} \right|-\mathfrak RA^{-1}\le \frac{{{\left( \sqrt{M}-\sqrt{m} \right)}^{2}}}{M+m}\left\| A^{-1} \right\|{I}.\]
\end{itemize}
\end{corollary}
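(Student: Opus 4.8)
As in Theorem~\ref{00}, everything reduces to part~(i): applying~(i) with $\textup{i}A^{*}$ in place of $A$ --- and using $|\textup{i}A^{*}| = |A^{*}|$, $\mathfrak{R}(\textup{i}A^{*}) = \mathfrak{I}A$, $\|\textup{i}A^{*}\| = \|A\|$ --- yields~(ii), and applying it with $A^{-1}$ in place of $A$ yields~(iii). So it suffices to treat~(i), with $C_{M,m}(A)$ accretive.

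For the right-hand inequality I would invoke Theorem~\ref{00}(i), which says $|A| \le \frac{M+m}{2\sqrt{Mm}}\mathfrak{R}A$, equivalently $\mathfrak{R}A \ge \frac{2\sqrt{Mm}}{M+m}|A|$. Subtracting this from $|A|$, and then bounding $|A| \le \|\,|A|\,\|\,I = \|A\|\,I$, gives
\[
  |A| - \mathfrak{R}A \le \Bigl(1 - \frac{2\sqrt{Mm}}{M+m}\Bigr)|A| = \frac{(\sqrt{M}-\sqrt{m})^{2}}{M+m}\,|A| \le \frac{(\sqrt{M}-\sqrt{m})^{2}}{M+m}\,\|A\|\,I,
\]
which is the claimed upper bound. (Alternatively one can start from \eqref{04}, obtaining $|A| - \mathfrak{R}A \le \frac{1}{M+m}(MI - |A|)(|A| - mI)$, and close with the scalar identity $(\sqrt{M}-\sqrt{m})^{2}t - (M-t)(t-m) = (t-\sqrt{Mm})^{2} \ge 0$ applied to the spectrum of $|A|$.) I would also note, for later, that accretivity of $C_{M,m}(A)$ is by \eqref{5} equivalent to $\|(A - \tfrac{M+m}{2}I)x\| \le \tfrac{M-m}{2}\|x\|$ for all $x$, whence $m\|x\| \le \|Ax\| \le M\|x\|$ and so $mI \le |A| \le MI$; the same argument applied to $A^{*}$ (whose transform $C_{M,m}(A^{*})$ is accretive as well, again by \eqref{5}) gives $mI \le |A^{*}| \le MI$.

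The step I expect to be the real obstacle is the left-hand inequality $0 \le |A| - \mathfrak{R}A$, i.e.\ $\mathfrak{R}A \le |A|$. This is \emph{not} a general operator inequality: it already fails for the non-normal operator $A = \bigl(\begin{smallmatrix}1&1\\0&1\end{smallmatrix}\bigr)$, which moreover satisfies the hypothesis ($C_{M,m}(A)$ accretive) for suitably chosen $0 < m < M$. So accretivity of $C_{M,m}(A)$ would have to be used in an essential way, possibly in conjunction with some further assumption. The first thing I would try is the mixed Schwarz inequality $\mathfrak{R}A \le \frac{|A| + |A^{*}|}{2}$ together with the bounds $mI \le |A| \le MI$ and $mI \le |A^{*}| \le MI$ recorded above; but that yields only $\mathfrak{R}A \le MI$. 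Getting from there to $\mathfrak{R}A \le |A|$ --- either by extracting more from the operator inequality $(A - \tfrac{M+m}{2}I)^{*}(A - \tfrac{M+m}{2}I) \le (\tfrac{M-m}{2})^{2}I$, or by adding a hypothesis such as normality of $A$ (for which $|A| = |A^{*}|$ and the mixed Schwarz bound closes the argument at once) --- is where the main difficulty lies.
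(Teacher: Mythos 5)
Your treatment of the right-hand inequality is exactly the paper's proof: parts (ii) and (iii) are obtained by the substitutions $A\mapsto \textup{i}A^{*}$ and $A\mapsto A^{-1}$, and for part (i) the paper writes precisely the chain $|A|-\mathfrak{R}A\le\bigl(1-\frac{2\sqrt{Mm}}{M+m}\bigr)|A|=\frac{(\sqrt{M}-\sqrt{m})^{2}}{M+m}|A|\le\frac{(\sqrt{M}-\sqrt{m})^{2}}{M+m}\,\bigl\|\,|A|\,\bigr\|\,I=\frac{(\sqrt{M}-\sqrt{m})^{2}}{M+m}\|A\|\,I$, exactly as you do. So on that half you have reproduced the paper's argument.

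Your suspicion about the left-hand inequality is justified, and the ``obstacle'' you describe is not one you failed to overcome: the paper's proof consists solely of the display above and then declares part (i) complete, i.e.\ it never addresses $0\le|A|-\mathfrak{R}A$ at all. Moreover, your proposed counterexample does refute that claim under the stated hypothesis. For $A=\left[\begin{smallmatrix}1&1\\0&1\end{smallmatrix}\right]$ one computes $|A|=\frac{1}{\sqrt{5}}\left[\begin{smallmatrix}2&1\\1&3\end{smallmatrix}\right]$ and $\mathfrak{R}A=\left[\begin{smallmatrix}1&1/2\\1/2&1\end{smallmatrix}\right]$, so the $(1,1)$ entry of $|A|-\mathfrak{R}A$ is $\tfrac{2}{\sqrt{5}}-1<0$ and $|A|-\mathfrak{R}A\not\ge0$; yet, taking $M=10$ and $m=\tfrac{1}{10}$, the criterion \eqref{04} gives $(M+m)\mathfrak{R}A-MmI-|A|^{2}=\left[\begin{smallmatrix}8.1&4.05\\4.05&7.1\end{smallmatrix}\right]>0$, so $C_{M,m}(A)$ is accretive. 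Hence $\mathfrak{R}A\le|A|$ genuinely fails here; it would be recovered under an additional normality assumption (via $\mathfrak{R}A\le\frac{|A|+|A^{*}|}{2}$ and $|A|=|A^{*}|$), but not in general. In short, your proposal proves everything the paper actually proves, and the part you could not prove is an error in the statement rather than a gap in your argument.
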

 \begin{proof}
By Theorem \ref{00}, 
 \[\begin{aligned}
    \left| A \right|-\mathfrak RA&\le \left( 1-\frac{2\sqrt{Mm}}{M+m} \right)\left| A \right| \\ 
  & =\frac{{{\left( \sqrt{M}-\sqrt{m} \right)}^{2}}}{M+m}\left| A \right| \\ 
  & \le \frac{{{\left( \sqrt{M}-\sqrt{m} \right)}^{2}}}{M+m}\left\|\; \left| A \right| \;\right\|{I} \\ 
  & =\frac{{{\left( \sqrt{M}-\sqrt{m} \right)}^{2}}}{M+m}\left\| A \right\|{I}.  
 \end{aligned}\]
This completes the proof of part (i). 

The other two parts can be proven similarly.
 \end{proof} 

In the following theorem, we use the fact that the function $f(x)=x^2$ is an operator convex function on $\mathbb{R}$. This means that when $A$ and $B$ are self adjoint operators in $\mathcal{B}(\mathcal{H})$, we have $((1-t)A+tB)^2\leq (1-t)A^2+tB^2$ for $0\leq t\leq 1.$ We refer the reader to \cite[Chapter V]{bhatia_matrix}  for
further information on operator convex functions.
\begin{theorem}
Let $A\in \mathcal B\left( \mathcal H \right)$.
 If ${C_{M,m}}\left( A \right)$ and  ${C_{M,m}}\left(\textup i{{A}^{*}} \right)$ are accretive, then for any $0\le t\le 1$,
 \[\left( 1-t \right)\left| {{A}^{*}} \right|+t\left| A \right|\le \frac{M+m}{2\sqrt{Mm}}\left( \left( 1-t \right)\mathfrak IA+t\mathfrak RA \right).\]
\end{theorem}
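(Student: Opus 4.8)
The plan is to combine the two inequalities already supplied by Theorem \ref{00} with the operator convexity of $f(x)=x^2$. From part (i), the accretivity of ${C_{M,m}}(A)$ gives $\left|A\right|\le \frac{M+m}{2\sqrt{Mm}}\mathfrak RA$, and from part (ii), the accretivity of ${C_{M,m}}(\textup i A^*)$ gives $\left|A^*\right|\le \frac{M+m}{2\sqrt{Mm}}\mathfrak IA$. Both hypotheses are assumed, so both conclusions are available at once. The most direct route would simply be to take the convex combination of these two operator inequalities: multiplying the first by $t$, the second by $1-t$, and adding yields exactly the claimed inequality $(1-t)\left|A^*\right|+t\left|A\right|\le \frac{M+m}{2\sqrt{Mm}}\left((1-t)\mathfrak IA+t\mathfrak RA\right)$.

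However, since the statement explicitly flags operator convexity of $x\mapsto x^2$ as the tool, I expect the intended argument goes through the squared quantities rather than through the linear combination directly, presumably to make the bound genuinely stronger or to mirror the style of \eqref{04}. Concretely, I would start from the two instances of inequality \eqref{04}: accretivity of ${C_{M,m}}(A)$ gives $(M+m)\mathfrak RA\ge Mm\,I+\left|A\right|^2$, and the analogous computation for $\textup i A^*$ (using $\mathfrak R(\textup i A^*)=\mathfrak I A$ and $\left|\textup i A^*\right|^2=\left|A^*\right|^2$) gives $(M+m)\mathfrak IA\ge Mm\,I+\left|A^*\right|^2$. Taking the convex combination with weights $t$ and $1-t$ produces $(M+m)\bigl((1-t)\mathfrak IA+t\mathfrak RA\bigr)\ge Mm\,I+\bigl((1-t)\left|A^*\right|^2+t\left|A\right|^2\bigr)$. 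Now operator convexity of $x^2$ applied to the self-adjoint operators $\left|A^*\right|$ and $\left|A\right|$ gives $\bigl((1-t)\left|A^*\right|+t\left|A\right|\bigr)^2\le (1-t)\left|A^*\right|^2+t\left|A\right|^2$, so, writing $T=(1-t)\left|A^*\right|+t\left|A\right|\ge 0$, we obtain $(M+m)\bigl((1-t)\mathfrak IA+t\mathfrak RA\bigr)\ge Mm\,I+T^2$.

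The final step is the operator arithmetic–geometric mean inequality $Mm\,I+T^2\ge 2\sqrt{Mm}\,T$, which holds for any positive operator $T$ since it is the positivity of $(T-\sqrt{Mm}\,I)^2$. Dividing through by $2\sqrt{Mm}$ yields precisely $(1-t)\left|A^*\right|+t\left|A\right|\le \frac{M+m}{2\sqrt{Mm}}\bigl((1-t)\mathfrak IA+t\mathfrak RA\bigr)$, as desired.

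The only delicate point is the use of operator convexity: one must be sure it is applied to a pair of \emph{self-adjoint} operators, which is legitimate here because $\left|A\right|$ and $\left|A^*\right|$ are positive (hence self-adjoint). Everything else is a routine combination of inequalities already in hand, so I do not anticipate a genuine obstacle; the main thing to get right is the bookkeeping of which transform yields which of $\mathfrak RA$ and $\mathfrak IA$.
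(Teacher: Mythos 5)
Your second, detailed argument is exactly the paper's proof: it combines the two instances of inequality \eqref{04} with weights $t$ and $1-t$, applies operator convexity of $x\mapsto x^2$ to the positive operators $|A|$ and $|A^*|$, and finishes with the arithmetic--geometric mean inequality $Mm\,I+T^2\ge 2\sqrt{Mm}\,T$. Your preliminary observation is also correct and worth noting: since the Loewner order is preserved under convex combinations, the stated inequality already follows in one line from Theorem \ref{00}(i) and (ii), so the operator-convexity detour is not logically needed for the theorem as stated (though it does yield the stronger squared intermediate inequality).
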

\begin{proof}
By  (i) and (ii) of Theorem \ref{00}, we have
\[\left( 1-t \right)Mm{I}+\left( 1-t \right){{\left| {{A}^{*}} \right|}^{2}}\le \left( 1-t \right)\left( M+m \right)\mathfrak IA\]
and
\[tMm{I}+t{{\left| A \right|}^{2}}\le t\left( M+m \right)\mathfrak RA.\]
Adding these inequalities, we obtain
\[Mm{I}+\left( 1-t \right){{\left| {{A}^{*}} \right|}^{2}}+t{{\left| A \right|}^{2}}\le \left( M+m \right)\left( \left( 1-t \right)\mathfrak IA+t\mathfrak RA \right).\]
Now, using the operator convexity of the function $f\left( t \right)={{t}^{2}}$ on $\left( 0,\infty  \right)$ and the operator arithmetic-geometric mean inequality, we get
\[\begin{aligned}
   2\sqrt{Mm}\left( \left( 1-t \right)\left| {{A}^{*}} \right|+t\left| A \right| \right)&\le Mm{I}+{{\left( \left( 1-t \right)\left| {{A}^{*}} \right|+t\left| A \right| \right)}^{2}} \\ 
 & \le Mm{I}+\left( 1-t \right){{\left| {{A}^{*}} \right|}^{2}}+t{{\left| A \right|}^{2}}.  
\end{aligned}\]
This completes the proof.
\end{proof}

Squaring operator inequalities are more complex than squaring real inequalities. In other words, if $a,b$ are positive numbers such that $a\leq b,$ then $a^2\leq b^2$. Now, if $A\leq B$, where $A, B$ are positive operators,  we cannot conclude $A^2\leq B^2$ since the function $f(x)=x^2$ is not operator monotone. We refer the reader to \cite[Chapter V]{bhatia_matrix} to get more insight about this. The following result shows that the inequalities in Theorem \ref{00} can be squared.
\begin{theorem}\label{05}
Let $A\in \mathcal B\left( \mathcal H \right)$.
\begin{itemize}
\item[(i)]  If ${C_{M,m}}\left( A \right)$ is accretive, then
\begin{equation}\label{06}
{{\left| A \right|}^{2}}\le {{\left( \frac{M+m}{2\sqrt{Mm}} \right)}^{2}}{{\left( \mathfrak RA \right)}^{2}}.
\end{equation}

\item[(ii)]  If ${C_{M,m}}\left(\textup i{{A}^{*}} \right)$ is accretive, then
\[\left| {{A}^{*}} \right|^{2}\le {{\left( \frac{M+m}{2\sqrt{Mm}} \right)}^{2}}{{\left( \mathfrak IA \right)}^{2}}.\]

\item[(iii)]  If $A$ is invertible and ${C_{M,m}}\left( A^{-1} \right)$ is accretive, then
\begin{equation*}
{{\left| A^{-1} \right|}^{2}}\le {{\left( \frac{M+m}{2\sqrt{Mm}} \right)}^{2}}{{\left( \mathfrak RA^{-1} \right)}^{2}}.
\end{equation*}
\end{itemize}
\end{theorem}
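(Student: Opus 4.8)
The plan for part (i) is to reduce to the single operator inequality already isolated as \eqref{04} in the proof of Theorem \ref{00} and then to complete the square. Since $C_{M,m}(A)$ is accretive we have $\mathfrak{R}C_{M,m}(A)\ge 0$, which --- exactly as in that proof --- is equivalent to $|A|^2\le (M+m)\mathfrak{R}A-MmI$. So it suffices to show
\[
(M+m)\,\mathfrak{R}A-MmI\le\left(\frac{M+m}{2\sqrt{Mm}}\right)^2(\mathfrak{R}A)^2
\]
and then chain the two estimates to obtain \eqref{06}.

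For this last inequality, write $c=\frac{M+m}{2\sqrt{Mm}}$ and $P=\mathfrak{R}A$. The key observation is that $c^2P^2-(M+m)P+MmI$ is a polynomial in the \emph{single} self-adjoint operator $P$, so it may be factored exactly as in the scalar case, with no interference from non-commutativity --- this is also why the warning preceding the theorem about $x\mapsto x^2$ not being operator monotone does not bite here. Its discriminant is $(M+m)^2-4c^2Mm=0$, by the very definition of the Kantorovich constant $c$; hence it is a perfect square,
\[
c^2P^2-(M+m)P+MmI=c^2\left(P-\frac{2Mm}{M+m}I\right)^2\ge 0,
\]
and rearranging gives precisely the required bound. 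Combining it with $|A|^2\le (M+m)\mathfrak{R}A-MmI$ proves (i).

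Parts (ii) and (iii) will follow by the same substitutions as in Theorem \ref{00}: replacing $A$ by $\textup i A^*$ in \eqref{06} yields the statement for $|A^*|^2$ and $(\mathfrak{I}A)^2$, since $|\,\textup i A^*\,|=|A^*|$ and $\mathfrak{R}(\textup i A^*)=\mathfrak{I}A$, while replacing $A$ by $A^{-1}$ gives the statement for $A^{-1}$. I do not expect a genuine obstacle: the one point requiring care is the justification that the ``complete the square'' manipulation is legal, which it is because only powers of a single self-adjoint operator are involved. The real content is simply realizing that one must use the full strength of \eqref{04}, namely $|A|^2\le (M+m)\mathfrak{R}A-MmI$, rather than the already-square-rooted form $|A|\le c\,\mathfrak{R}A$ of Theorem \ref{00}(i); the rest is bookkeeping.
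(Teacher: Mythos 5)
Your proposal is correct and follows essentially the same route as the paper: reduce to $|A|^2\le (M+m)\mathfrak{R}A-MmI$ from \eqref{04}, then dominate the right-hand side by $\left(\frac{M+m}{2\sqrt{Mm}}\right)^2(\mathfrak{R}A)^2$ via the nonnegativity of the quadratic $c^2t^2-(M+m)t+Mm$ together with functional calculus for the self-adjoint operator $\mathfrak{R}A$. The only (cosmetic) difference is that you verify the nonnegativity by completing the square and a vanishing discriminant, whereas the paper uses convexity and the location of the minimum.
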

\begin{proof}
By the inequality \eqref{04},
	\[{{\left| A \right|}^{2}}\le \left( M+m \right)\mathfrak RA-Mm{I}.\]
So, to prove the inequality \eqref{06}, it is enough to show that
	\[\left( M+m \right)\mathfrak RA-Mm{I}\le {{\left( \frac{M+m}{2\sqrt{Mm}} \right)}^{2}}{{\left( \mathfrak RA \right)}^{2}},\]
holds. For $m\leq t\leq M$, define 
	\[f\left( t \right)={{\left( \frac{M+m}{2\sqrt{Mm}} \right)}^{2}}{{t}^{2}}-\left( M+m \right)t+Mm.\]
Then
	\[f'\left( t \right)=\frac{{{\left( M+m \right)}^{2}}}{2Mm}t-\left( M+m \right),\]
and
	\[f''\left( t \right)=\frac{{{\left( M+m \right)}^{2}}}{2Mm}>0.\]
Namely, $f$ is convex. On the other hand, if we put $f'\left( t \right)=0$, then we get $t={2Mm}/{\left( M+m \right)}\;$, and $f\left( {2Mm}/{\left( M+m \right)}\; \right)=0$. So $f\left( t \right)$ is positive, i.e.,
	\[\left( M+m \right)t-Mm\le {{\left( \frac{M+m}{2\sqrt{Mm}} \right)}^{2}}{{t}^{2}}.\]
We get the desired result by applying functional calculus for the positive operator $\mathfrak RA$.

The other parts can be established similarly, so we omit the details.
\end{proof}

In fact, Theorem \ref{00} is a direct consequence of Theorem \ref{05}, since $f\left( t \right)={{t}^{{1}/{2}\;}}$ is operator monotone on $\left( 0,\infty  \right)$, \cite[Theorem 1.5.9]{5}.

Our next target is to investigate commutators of $|A|$ and $\mathfrak{R}A$. To this end, the following lemma will be needed.
\begin{lemma}\label{3}
\cite[Lemma 2.1]{8} Let $A,B\in \mathcal B\left( \mathcal H \right)$ be positive operators and let $\alpha >0$. Then 
	\[A\le \alpha B\Leftrightarrow \left\| {{A}^{\frac{1}{2}}}{{B}^{-\frac{1}{2}}} \right\|\le \sqrt{\alpha }.\]
\end{lemma}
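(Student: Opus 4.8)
The plan is to collapse the operator inequality $A\le\alpha B$ to an elementary norm estimate by a change of variables, using that $B^{-1/2}$ is bounded and bijective (this is implicit in the statement, since writing $B^{-1/2}$ presumes $B$ positive definite). No deep tool is involved; the lemma is essentially a reformulation of the dictionary between positivity and operator norm.

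First I would record that dictionary: for a positive operator $T$ one has $\langle Tx,x\rangle=\|T^{1/2}x\|^2$, so $A\le\alpha B$ is equivalent to $\|A^{1/2}x\|^2\le\alpha\|B^{1/2}x\|^2$ for every $x\in\mathcal H$. Next, since $B$ is invertible, the map $y\mapsto B^{-1/2}y$ is a bijection of $\mathcal H$; substituting $x=B^{-1/2}y$ turns the last inequality into $\|A^{1/2}B^{-1/2}y\|^2\le\alpha\|y\|^2$ for every $y\in\mathcal H$. By the definition of the operator norm this is exactly $\|A^{1/2}B^{-1/2}\|^2\le\alpha$, and taking square roots gives $\|A^{1/2}B^{-1/2}\|\le\sqrt\alpha$. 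Each implication is reversible, which yields the stated equivalence.

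A slicker route avoiding the substitution is to note that $A\le\alpha B\iff B^{-1/2}AB^{-1/2}\le\alpha I\iff\|B^{-1/2}AB^{-1/2}\|\le\alpha$, and then to use $\|B^{-1/2}AB^{-1/2}\|=\|(B^{-1/2}A^{1/2})(B^{-1/2}A^{1/2})^*\|=\|B^{-1/2}A^{1/2}\|^2=\|A^{1/2}B^{-1/2}\|^2$, the last equality because $\|X\|=\|X^*\|$. The only points needing a word of care are the legitimacy of the change of variables (equivalently, the boundedness and invertibility of $B^{-1/2}$) and the identity $\|XX^*\|=\|X\|^2$; there is no genuine obstacle, and since this is quoted from \cite[Lemma 2.1]{8} one may simply cite it.
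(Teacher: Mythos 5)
The paper does not prove this lemma at all --- it is imported verbatim with the citation \cite[Lemma 2.1]{8} --- so there is no in-paper argument to compare against. Your proof is correct and is the standard one: the chain
\[
A\le \alpha B \iff \|A^{1/2}x\|^{2}\le \alpha\|B^{1/2}x\|^{2}\ \ (\forall x) \iff \|A^{1/2}B^{-1/2}y\|^{2}\le\alpha\|y\|^{2}\ \ (\forall y) \iff \|A^{1/2}B^{-1/2}\|\le\sqrt{\alpha}
\]
is valid, and your ``slicker route'' via $B^{-1/2}AB^{-1/2}\le\alpha I$ together with the $C^{*}$-identity $\|XX^{*}\|=\|X\|^{2}$ is equally fine. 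The one point worth stating explicitly rather than leaving implicit: in infinite dimensions, ``positive'' in this paper's sense ($\langle Bx,x\rangle>0$ for $x\neq 0$) does not guarantee that $B^{-1/2}$ is a bounded operator (consider a compact injective positive $B$); the lemma tacitly assumes $B\ge cI$ for some $c>0$, i.e.\ that $B$ is boundedly invertible, which is exactly the situation in which the lemma is applied later (to $\mathfrak{R}A$ assumed invertible in Corollary \ref{4}). With that hypothesis made explicit, every step of your argument is reversible and the equivalence holds.
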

\begin{corollary}\label{4}
Let $A\in \mathcal B\left( \mathcal H \right)$ be such that both $\mathfrak{R}A$ and $\mathfrak{I}A$ are invertible.
\begin{itemize}
\item[(i)]  If ${C_{M,m}}\left( A \right)$ is accretive, then
\[\left| \left| A \right|{{\left( \mathfrak RA \right)}^{-1}}+{{\left( \mathfrak RA \right)}^{-1}}\left| A \right| \right|\le \frac{M+m}{\sqrt{Mm}}{I},\]
and
\[\left| A \right|{{\left( \mathfrak RA \right)}^{-1}}+{{\left( \mathfrak RA \right)}^{-1}}\left| A \right|\le \frac{M+m}{\sqrt{Mm}}{I}.\]
\item[(ii)]  If ${C_{M,m}}\left(\textup i{{A}^{*}} \right)$ is accretive, then
\[\left| \left| {{A}^{*}} \right|{{\left( \mathfrak IA \right)}^{-1}}+{{\left( \mathfrak IA \right)}^{-1}}\left| {{A}^{*}} \right| \right|\le \frac{M+m}{\sqrt{Mm}}{I},\]
and
\[\left| {{A}^{*}} \right|{{\left( \mathfrak IA \right)}^{-1}}+{{\left( \mathfrak IA \right)}^{-1}}\left| {{A}^{*}} \right|\le \frac{M+m}{\sqrt{Mm}}{I}.\]
\end{itemize}
\end{corollary}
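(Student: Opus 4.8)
The plan is to read off both conclusions of each part from the squared estimate in Theorem~\ref{05}, convert that operator inequality into a norm bound by means of Lemma~\ref{3}, and then upgrade the norm bound back to an operator inequality using self-adjointness.

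For part (i), assume $C_{M,m}(A)$ is accretive. First I would note that $\mathfrak RA\ge 0$ by statement (6) of Proposition~\ref{prop_properties_C}; together with the hypothesis that $\mathfrak RA$ is invertible this gives $\mathfrak RA>0$, so that $(\mathfrak RA)^{-1}>0$ and $\big((\mathfrak RA)^{2}\big)^{-1/2}=(\mathfrak RA)^{-1}$. By Theorem~\ref{05}(i) we have the operator inequality
\[
|A|^{2}\le\Big(\tfrac{M+m}{2\sqrt{Mm}}\Big)^{2}(\mathfrak RA)^{2}.
\]
Now I would apply Lemma~\ref{3} to the positive operators $|A|^{2}$ and $(\mathfrak RA)^{2}$ with $\alpha=\big(\tfrac{M+m}{2\sqrt{Mm}}\big)^{2}$, obtaining
\[
\big\||A|\,(\mathfrak RA)^{-1}\big\|=\big\|(|A|^{2})^{1/2}\big((\mathfrak RA)^{2}\big)^{-1/2}\big\|\le\tfrac{M+m}{2\sqrt{Mm}}.
\]
Since $|A|$ and $(\mathfrak RA)^{-1}$ are self-adjoint, $\big((\mathfrak RA)^{-1}|A|\big)^{*}=|A|\,(\mathfrak RA)^{-1}$, so $\big\|(\mathfrak RA)^{-1}|A|\big\|$ equals the same quantity; the triangle inequality then yields the first displayed bound of part (i), with constant $\tfrac{M+m}{\sqrt{Mm}}$.

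For the second displayed inequality of part (i), I would observe that $X:=|A|\,(\mathfrak RA)^{-1}+(\mathfrak RA)^{-1}|A|$ is self-adjoint, and that a self-adjoint operator with $\|X\|\le c$ satisfies $X\le cI$; taking $c=\tfrac{M+m}{\sqrt{Mm}}$ from the previous step gives the claim. Part (ii) is obtained by running the same argument with $A$ replaced by $\textup iA^{*}$: then $C_{M,m}(\textup iA^{*})$ is the accretive operator in the hypothesis, $|\textup iA^{*}|^{2}=AA^{*}=|A^{*}|^{2}$, and $\mathfrak R(\textup iA^{*})=\mathfrak IA$, which is $\ge 0$ by statement (6) applied to $\textup iA^{*}$ and hence $>0$ by the invertibility hypothesis, so that Theorem~\ref{05}(ii) takes the place of Theorem~\ref{05}(i).

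I do not expect a genuine obstacle here; the argument is essentially bookkeeping around Theorem~\ref{05} and Lemma~\ref{3}. The one point requiring care is the identification $\big((\mathfrak RA)^{2}\big)^{-1/2}=(\mathfrak RA)^{-1}$ (and likewise for $\mathfrak IA$ in part (ii)), which relies on the positivity of $\mathfrak RA$ furnished by property (6) of Proposition~\ref{prop_properties_C} together with the invertibility assumption; without positivity the positive square root of $(\mathfrak RA)^{2}$ need not be $\mathfrak RA$ itself, and the passage through Lemma~\ref{3} would fail.
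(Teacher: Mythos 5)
Your argument is correct and follows the paper's own route: the key step in both is to feed the squared inequality of Theorem~\ref{05} into Lemma~\ref{3} to obtain $\bigl\|\,|A|(\mathfrak RA)^{-1}\bigr\|\le\frac{M+m}{2\sqrt{Mm}}$, the only difference being that you finish with the triangle inequality and the elementary fact that a self-adjoint $X$ with $\|X\|\le c$ satisfies $|X|\le cI$ and $X\le cI$, whereas the paper sums two positive $2\times 2$ operator matrices via the Horn--Johnson lemma --- the two finishes are equivalent. Your explicit remark that $\bigl((\mathfrak RA)^{2}\bigr)^{-1/2}=(\mathfrak RA)^{-1}$ needs the positivity of $\mathfrak RA$ (from property (6) of Proposition~\ref{prop_properties_C} plus invertibility) is a point the paper leaves implicit, and it is correctly handled.
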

\begin{proof}
By Lemma \ref{3}, the inequality \eqref{06} is equivalent to
\[\left\| \left| A \right|{{\left( \mathfrak RA \right)}^{-1}} \right\|\le \frac{M+m}{2\sqrt{Mm}}.\]
By \cite[Lemma 3.5.12]{4}, we get
\[\left[ \begin{matrix}
   \frac{M+m}{2\sqrt{Mm}}{I} & \left| A \right|{{\left( \mathfrak RA \right)}^{-1}}  \\
   {{\left( \mathfrak RA \right)}^{-1}}\left| A \right| & \frac{M+m}{2\sqrt{Mm}}{I}  \\
\end{matrix} \right]\ge 0\text{ and }\left[ \begin{matrix}
   \frac{M+m}{2\sqrt{Mm}}{I} & {{\left( \mathfrak RA \right)}^{-1}}\left| A \right|  \\
   \left| A \right|{{\left( \mathfrak RA \right)}^{-1}} & \frac{M+m}{2\sqrt{Mm}}{I}  \\
\end{matrix} \right]\ge 0.\]
Adding these two operator matrices, we have
\[\left[ \begin{matrix}
   \frac{M+m}{\sqrt{Mm}}{I} & \left| A \right|{{\left( \mathfrak RA \right)}^{-1}}+{{\left( \mathfrak RA \right)}^{-1}}\left| A \right|  \\
   {{\left( \mathfrak RA \right)}^{-1}}\left| A \right|+\left| A \right|{{\left( \mathfrak RA \right)}^{-1}} & \frac{M+m}{\sqrt{Mm}}{I}  \\
\end{matrix} \right]\ge 0.\]
This completes the proof.
\end{proof}

\begin{remark}
To show how Corollary \ref{4} improves Theorem \ref{05}, we recall the following inequality \cite[(1.9)]{9}, for  $A,B\geq 0$:
\begin{equation}\label{ned_bhakit}
||AB||\leq\frac{1}{4}\|(A+B)\|^2.
\end{equation}
 Now, we notice that
\[\begin{aligned}
   4\left| A \right|{{\left( \mathfrak RA \right)}^{-2}}\left| A \right|&\le 4\left\| \left| A \right|{{\left( \mathfrak RA \right)}^{-2}}\left| A \right| \right\|{I} \\ 
 & \le {{\left\| \left| A \right|{{\left( \mathfrak RA \right)}^{-1}}+{{\left( \mathfrak RA \right)}^{-1}}\left| A \right| \right\|}^{2}}{I} \quad \text{(by \eqref{ned_bhakit})}\\ 
 & \le {{\left( \frac{M+m}{\sqrt{Mm}} \right)}^{2}}{I}, 
\end{aligned}\]
which is equivalent to saying that
\[\begin{aligned}
   {{\left( \mathfrak RA \right)}^{-2}}&\le \left\| \left| A \right|{{\left( \mathfrak RA \right)}^{-2}}\left| A \right| \right\|{{\left| A \right|}^{-2}} \\ 
 & \le {{\left\| \frac{\left| A \right|{{\left( \mathfrak RA \right)}^{-1}}+{{\left( \mathfrak RA \right)}^{-1}}\left| A \right|}{2} \right\|}^{2}}{{\left| A \right|}^{-2}} \\ 
 & \le {{\left( \frac{M+m}{2\sqrt{Mm}} \right)}^{2}}{{\left| A \right|}^{-2}}.  
\end{aligned}\]
Now, by taking the inverse, we get
\[\begin{aligned}
   {{\left( \mathfrak RA \right)}^{2}}&\ge {{\left\| \left| A \right|{{\left( \mathfrak RA \right)}^{-2}}\left| A \right| \right\|}^{-1}}{{\left| A \right|}^{2}} \\ 
 & \ge {{\left\| \frac{\left| A \right|{{\left( \mathfrak RA \right)}^{-1}}+{{\left( \mathfrak RA \right)}^{-1}}\left| A \right|}{2} \right\|}^{-2}}{{\left| A \right|}^{2}} \\ 
 & \ge {{\left( \frac{2\sqrt{Mm}}{M+m} \right)}^{2}}{{\left| A \right|}^{2}}.  
\end{aligned}\]
\end{remark}

For an arbitrary $A\in\mathcal{B}(\mathcal{H})$, the inequality
\begin{equation}\label{1}
\Phi^{\frac{1}{2}}(|A|^2)\geq \Phi(|A|)
\end{equation} 
is well known for the unital positive linear map $\Phi:\mathcal{B}(\mathcal{H})\to \mathcal{B}(\mathcal{H})$, \cite{choi,davis}. In this context, recall that such a map is a map that satisfies $\Phi(A)\geq 0$ whenever $A\geq 0$ and $\Phi(I)=I.$ In what follows, a reversed version is presented via the transform $C_{M,m}.$
\begin{lemma}\label{2}
 Let $A\in \mathcal B\left( \mathcal H \right)$ and let $\Phi $ be a unital positive linear map on $\mathcal B\left( \mathcal H \right)$. If ${C_{M,m}}\left( \left| A \right| \right)$  is accretive, then
	\[{{\Phi }^{\frac{1}{2}}}\left( {{\left| A \right|}^{2}} \right)\le \frac{M+m}{2\sqrt{Mm}}\Phi \left( \left| A \right| \right).\]
\end{lemma}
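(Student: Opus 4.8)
The plan is to reduce the statement to the scalar estimate already established inside the proof of Theorem \ref{05}, and then to invoke operator monotonicity of the square root. First, since $C_{M,m}(|A|)$ is accretive, part $(3)$ of Proposition \ref{prop_properties_C} tells us that $mI\le |A|\le MI$. Writing $B=|A|=(A^{*}A)^{1/2}$, this means $mI\le B\le MI$, hence $(MI-B)(B-mI)\ge 0$, which rearranges to $B^{2}\le (M+m)B-MmI$. Applying the unital positive linear map $\Phi$, which preserves the operator order and fixes $I$, we obtain
\[
\Phi\bigl(|A|^{2}\bigr)=\Phi(B^{2})\le (M+m)\Phi(B)-MmI .
\]

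Next, because $\Phi$ is unital and positive, $mI\le \Phi(B)\le MI$. In the proof of Theorem \ref{05} it was shown that the scalar function
\[
f(t)=\left(\frac{M+m}{2\sqrt{Mm}}\right)^{2}t^{2}-(M+m)t+Mm
\]
is nonnegative on $[m,M]$. Applying the functional calculus to the positive operator $\Phi(B)$ therefore gives $(M+m)\Phi(B)-MmI\le \left(\frac{M+m}{2\sqrt{Mm}}\right)^{2}\Phi(B)^{2}$. Combining this with the previous display yields
\[
\Phi\bigl(|A|^{2}\bigr)\le \left(\frac{M+m}{2\sqrt{Mm}}\right)^{2}\bigl(\Phi(|A|)\bigr)^{2}.
\]

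Finally, since $t\mapsto t^{1/2}$ is operator monotone on $(0,\infty)$ (see \cite[Theorem 1.5.9]{5}), we may take square roots of both sides of the last inequality to conclude
\[
\Phi^{\frac{1}{2}}\bigl(|A|^{2}\bigr)\le \frac{M+m}{2\sqrt{Mm}}\,\Phi\bigl(|A|\bigr),
\]
which is the asserted estimate. The only point that needs care — and the reason we pass through the squared inequality rather than trying to square the target directly — is that $t\mapsto t^{2}$ is not operator monotone, so the squared version must be proved first and the square root extracted afterward, precisely in the spirit of the discussion preceding Theorem \ref{05}. No genuine obstacle arises beyond this ordering of the steps.
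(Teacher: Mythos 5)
Your proof is correct. It reaches the same key intermediate inequality as the paper, namely
\[
\Phi\bigl(|A|^{2}\bigr)\le \frac{(M+m)^{2}}{4Mm}\,\Phi^{2}\bigl(|A|\bigr),
\]
and then finishes identically by taking square roots via operator monotonicity of $t\mapsto t^{1/2}$. The difference is purely in how that intermediate inequality is obtained: the paper simply cites \cite[Theorem 1.32 (iii)]{FMPS} (the Kantorovich-type inequality for unital positive linear maps), whereas you re-derive it from scratch — passing from accretivity of $C_{M,m}(|A|)$ to $mI\le|A|\le MI$ via Proposition \ref{prop_properties_C}(3), applying $\Phi$ to $(MI-|A|)(|A|-mI)\ge 0$, and then reusing the nonnegativity of the quadratic $f(t)=\left(\frac{M+m}{2\sqrt{Mm}}\right)^{2}t^{2}-(M+m)t+Mm$ already established in the proof of Theorem \ref{05}, applied by functional calculus to $\Phi(|A|)$. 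This is exactly the standard proof of the cited result, so your version buys self-containedness at the cost of a few extra lines; one small remark is that step in which you record $mI\le\Phi(|A|)\le MI$ is not actually needed, since $f\ge 0$ on all of $\mathbb{R}$, as the convexity argument in Theorem \ref{05} shows. Every individual step (positivity and unitality of $\Phi$ preserving the order, the functional-calculus application, and the final square-root extraction with the scalar pulled out of $(K\,\Phi(|A|)^{2})^{1/2}$ because $\Phi(|A|)\ge 0$) is sound.
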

\begin{proof}
Since ${C_{M,m}}\left( \left| A \right| \right)$  is accretive, we have by \cite[Theorem 1.32 (iii)]{FMPS},
\[\Phi \left( {{\left| A \right|}^{2}} \right)\le \frac{{{\left( M+m \right)}^{2}}}{4Mm}{{\Phi }^{2}}\left( \left| A \right| \right).\]
The result follows by taking into account that the function $f\left( t \right)={{t}^{\frac{1}{2}}}$ is operator monotone on $\left( 0,\infty  \right)$.
\end{proof}
In the next theorem, reverses of the inequalities of Theorem \ref{00} are presented.
\begin{theorem}\label{7}
Let $A\in \mathcal B\left( \mathcal H \right)$.
\begin{itemize}
\item[(i)] If ${C_{M,m}}\left( \left| A \right| \right)$  is accretive, then
\[\mathfrak RA\le \frac{M+m}{2\sqrt{Mm}}\left| A \right|.\]

\item[(ii)] If ${C_{M,m}}\left( \left| \textup i A^{*} \right| \right)$  is accretive, then
\[\mathfrak IA\le \frac{M+m}{2\sqrt{Mm}}\left| {{A}^{*}} \right|.\]

\item[(iii)] If $A$ is invertible and ${C_{M,m}}\left( \left| A^{-1} \right| \right)$  is accretive, then
\[\mathfrak RA^{-1}\le \frac{M+m}{2\sqrt{Mm}}\left| A^{-1} \right|.\]
\end{itemize}
\end{theorem}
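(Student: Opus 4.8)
The plan is to prove part (i) and then deduce (ii) and (iii) by the usual substitutions. The idea for (i) is to feed vector states into Lemma \ref{2}, so that the operator $\Phi^{\frac12}(|A|^2)$ appearing there becomes the scalar $\|Ax\|$, which in turn dominates $\langle(\mathfrak RA)x,x\rangle$.

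Concretely, I would first fix a unit vector $x\in\mathcal H$ and consider the map $\Phi_x:\mathcal B(\mathcal H)\to\mathcal B(\mathcal H)$ given by $\Phi_x(X)=\langle Xx,x\rangle I$. This is a unital positive linear map, since $\Phi_x(I)=I$ and $\Phi_x(X)\ge 0$ whenever $X\ge 0$. As $C_{M,m}(|A|)$ is accretive by hypothesis, Lemma \ref{2} applies to $\Phi_x$ and gives
\[
\langle|A|^2x,x\rangle^{\frac12}I=\Phi_x^{\frac12}(|A|^2)\le\frac{M+m}{2\sqrt{Mm}}\,\Phi_x(|A|)=\frac{M+m}{2\sqrt{Mm}}\langle|A|x,x\rangle I,
\]
that is, $\|Ax\|=\langle|A|^2x,x\rangle^{\frac12}\le\frac{M+m}{2\sqrt{Mm}}\langle|A|x,x\rangle$. (Alternatively one may bypass Lemma \ref{2}: by property (3) of Proposition \ref{prop_properties_C} we have $mI\le|A|\le MI$, hence $|A|^2\le(M+m)|A|-MmI$, and the scalar inequality $(M+m)t-Mm\le\frac{(M+m)^2}{4Mm}t^2$ for $t\in[m,M]$, established in the proof of Theorem \ref{05}, applied to $t=\langle|A|x,x\rangle\in[m,M]$, yields the same bound on $\|Ax\|^2$.)

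Next I would combine this with the elementary estimate $\langle(\mathfrak RA)x,x\rangle=\mathfrak R\langle Ax,x\rangle\le|\langle Ax,x\rangle|\le\|Ax\|$ valid for a unit vector $x$, obtaining
\[
\langle(\mathfrak RA)x,x\rangle\le\frac{M+m}{2\sqrt{Mm}}\langle|A|x,x\rangle
\]
for every unit $x$, hence for every $x\in\mathcal H$ by homogeneity; this is exactly $\mathfrak RA\le\frac{M+m}{2\sqrt{Mm}}|A|$, proving (i). For (iii) I apply (i) with $A$ replaced by $A^{-1}$, whose transform $C_{M,m}(|A^{-1}|)$ is accretive by assumption. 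For (ii) I apply (i) with $A$ replaced by $\textup iA^{*}$: since $(\textup iA^{*})^{*}(\textup iA^{*})=AA^{*}$ we get $|\textup iA^{*}|=|A^{*}|$, and since $\mathfrak R(\textup iA^{*})=\frac{\textup iA^{*}-\textup iA}{2}=\mathfrak IA$, part (i) becomes $\mathfrak IA\le\frac{M+m}{2\sqrt{Mm}}|A^{*}|$.

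The only real step is the passage to vector states. With $\Phi=\mathrm{id}$, Lemma \ref{2} is vacuous here, reading merely $|A|\le\frac{M+m}{2\sqrt{Mm}}|A|$; and naive operator manipulations do not relate $\mathfrak RA$ to $|A|$ either (for instance $\mathfrak RA\le\tfrac12(|A|+|A^{*}|)$ does not help, since $|A|$ and $|A^{*}|$ can differ substantially even when both lie in $[mI,MI]$). Restricting to the rank-one map $\Phi_x$ is precisely what turns $\Phi^{\frac12}(|A|^2)$ into the genuine scalar $\|Ax\|$ that bounds $\mathfrak R\langle Ax,x\rangle$; after that the argument is routine.
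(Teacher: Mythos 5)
Your proposal is correct and follows essentially the same route as the paper: both apply Lemma \ref{2} to the vector-state map $\Phi(T)=\langle Tx,x\rangle I$ to get $\langle|A|^2x,x\rangle^{1/2}\le\frac{M+m}{2\sqrt{Mm}}\langle|A|x,x\rangle$, combine this with $\langle(\mathfrak RA)x,x\rangle\le|\langle Ax,x\rangle|\le\|Ax\|$, and obtain (ii) and (iii) by the substitutions $A\mapsto\textup iA^{*}$ and $A\mapsto A^{-1}$. Your parenthetical alternative via property (3) and the scalar inequality from Theorem \ref{05} is a valid bonus, but the main argument coincides with the paper's.
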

\begin{proof}
We prove part (i) since the other parts are easy to prove. By the inequality \eqref{1}, we have for any unit vector $x\in \mathcal H$,
\[\left| \left\langle Ax,x \right\rangle  \right|\le {{\left\langle {{\left| A \right|}^{2}}x,x \right\rangle }^{\frac{1}{2}}}.\]
Applying Lemma \ref{2} for $\Phi \left( T \right)=\left\langle Tx,x \right\rangle {I} \left( x\in \mathcal H,\left\| x \right\|=1 \right)$, implies
 \[{{\left\langle {{\left| A \right|}^{2}}x,x \right\rangle }^{\frac{1}{2}}}\le \frac{M+m}{2\sqrt{Mm}}\left\langle \left| A \right|x,x \right\rangle .\]
 Hence,
 \[\left| \left\langle Ax,x \right\rangle  \right|\le \frac{M+m}{2\sqrt{Mm}}\left\langle \left| A \right|x,x \right\rangle .\]
 Now, by combining this inequality with the fact that $\left\langle \mathfrak RAx,x \right\rangle  \le \left| \left\langle Ax,x \right\rangle  \right|$, we reach the desired result.
\end{proof}

\begin{corollary}
Let $A\in \mathcal B\left( \mathcal H \right)$.
\begin{itemize}
\item[(i)] If ${C_{M,m}}\left( \left| A \right| \right)$  is accretive, then
\[\mathfrak RA-\left| A \right|\le \frac{{{\left( M-m \right)}^{2}}}{2\sqrt{Mm}}\left\| A \right\|{I}.\]

\item[(ii)] If ${C_{M,m}}\left( \left| \textup i A^{*} \right| \right)$  is accretive, then
\[\mathfrak IA-\left| {{A}^{*}} \right|\le \frac{{{\left( M-m \right)}^{2}}}{2\sqrt{Mm}}\left\| A \right\|{I}.\]

\end{itemize}
\end{corollary}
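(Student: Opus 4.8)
The plan is to derive both inequalities directly from Theorem \ref{7}, in exact parallel with the way the preceding corollary was deduced from Theorem \ref{00}. For part (i), the hypothesis that $C_{M,m}(|A|)$ is accretive is precisely the hypothesis of Theorem \ref{7}(i), so I would start from
\[
\mathfrak RA \le \frac{M+m}{2\sqrt{Mm}}\,|A|.
\]
Subtracting $|A|$ from both sides gives $\mathfrak RA - |A| \le \left(\frac{M+m}{2\sqrt{Mm}} - 1\right)|A|$, and then the routine algebraic identity
\[
\frac{M+m}{2\sqrt{Mm}} - 1 = \frac{M+m-2\sqrt{Mm}}{2\sqrt{Mm}} = \frac{(\sqrt M - \sqrt m)^2}{2\sqrt{Mm}} \ge 0
\]
reduces the problem to $\mathfrak RA - |A| \le \frac{(\sqrt M - \sqrt m)^2}{2\sqrt{Mm}}\,|A|$.

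To conclude part (i), I would replace $|A|$ on the right-hand side by the scalar operator $\|A\|\,I$, using $|A| \le \|A\|\,I$, obtaining
\[
\mathfrak RA - |A| \le \frac{(\sqrt M - \sqrt m)^2}{2\sqrt{Mm}}\,\|A\|\,I \le \frac{(M-m)^2}{2\sqrt{Mm}}\,\|A\|\,I,
\]
where the last inequality merely records $(\sqrt M - \sqrt m)^2 \le (M-m)^2$ so as to match the displayed constant; one may equally well keep the sharper bound with $(\sqrt M - \sqrt m)^2$ in the numerator. For part (ii), the hypothesis that $C_{M,m}(|\textup i A^*|)$ is accretive is exactly the hypothesis of Theorem \ref{7}(ii), which yields $\mathfrak IA \le \frac{M+m}{2\sqrt{Mm}}|A^*|$; running the same two steps with $|A^*|$ in place of $|A|$, together with $\|A^*\| = \|A\|$, gives the asserted estimate. (Alternatively, part (ii) is just part (i) applied to $\textup i A^*$, since $\mathfrak R(\textup i A^*) = \mathfrak IA$, $|\,\textup i A^*\,| = |A^*|$ and $\|\textup i A^*\| = \|A\|$.)

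I do not expect any genuine obstacle here: the corollary is a one-line consequence of Theorem \ref{7} combined with the trivial estimate $|A| \le \|A\|\,I$. The only point needing a moment's care is the bookkeeping of the constant — checking the identity $\frac{M+m}{2\sqrt{Mm}} - 1 = \frac{(\sqrt M - \sqrt m)^2}{2\sqrt{Mm}}$ and then comparing $(\sqrt M - \sqrt m)^2$ with $(M-m)^2$ to land on the form stated — but this is precisely the manipulation already performed in the proof of the earlier corollary (there with denominator $M+m$ rather than $2\sqrt{Mm}$), so no new idea is required.
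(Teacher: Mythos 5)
Your route is certainly the intended one: the paper states this corollary without any proof, and the evident argument is exactly yours, mirroring the corollary that follows Theorem \ref{00} --- apply Theorem \ref{7}, subtract $\left| A \right|$ (resp. $\left| A^* \right|$), compute $\frac{M+m}{2\sqrt{Mm}}-1=\frac{(\sqrt{M}-\sqrt{m})^2}{2\sqrt{Mm}}$, and use $\left| A \right|\le \left\| A \right\| I$ together with the nonnegativity of that coefficient. Everything up to
\[
\mathfrak RA-\left| A \right|\le \frac{{{\left( \sqrt{M}-\sqrt{m} \right)}^{2}}}{2\sqrt{Mm}}\left\| A \right\| I
\]
is correct, as are your reductions for part (ii) via $\mathfrak R(\textup i A^*)=\mathfrak IA$, $\left|\, \textup i A^* \,\right|=\left| A^* \right|$ and $\left\| A^* \right\|=\left\| A \right\|$.

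The one step that fails is the final comparison $(\sqrt{M}-\sqrt{m})^2\le (M-m)^2$, which you invoke to match the displayed constant. Since $(M-m)^2=(\sqrt{M}-\sqrt{m})^2(\sqrt{M}+\sqrt{m})^2$, that comparison is equivalent to $\sqrt{M}+\sqrt{m}\ge 1$, and the standing hypothesis is only $M>m>0$. When $\sqrt{M}+\sqrt{m}<1$ the inequality reverses, so the displayed constant is \emph{smaller} than the one you proved and your argument does not deliver the stated bound. In fact the statement as printed fails in that regime: take $M=0.04$, $m=0.01$ and $A=U\,\mathrm{diag}(0.04,\,0.01)$ with $U$ the planar rotation by $\theta=0.1$; then $mI\le \left| A \right|\le MI$, so $C_{M,m}(\left| A \right|)$ is accretive, yet the largest eigenvalue of $\mathfrak RA-\left| A \right|$ is about $1.37\times 10^{-3}$ while $\frac{(M-m)^2}{2\sqrt{Mm}}\left\| A \right\|=9\times 10^{-4}$. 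So the defect lies in the paper's constant (presumably $(M-m)^2$ is a misprint for $(\sqrt{M}-\sqrt{m})^2$, the exact analogue of the earlier corollary's $\frac{(\sqrt{M}-\sqrt{m})^2}{M+m}$) rather than in your method: keep the sharper bound you actually derived, and do not justify the weakening by a scalar inequality that is false for small $M$ and $m$.
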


 \section{Numerical Radius Inequalities}
This section uses the properties of the transform $C_{M,m}$ and its consequences to obtain some new numerical radius inequalities. In this context, we recall that the numerical range of  $A\in\mathcal{B}(\mathcal{H})$ is defined as
$$W(A)=\{\left<Ax,x\right>:x\in\mathcal{H},\|x\|=1\}.$$

Then, we define the numerical radius of  $A$ as 
$$\omega(A)=\sup\{|z|:z\in W(A)\}.$$
 The numerical radius has a notable recognition in the literature due to its impact on understanding the geometry of the numerical range of the operator. Among the most basic inequalities of the numerical radius, we have
\begin{equation}\label{eq_omega_norm}
\frac{1}{2}\|A\|\leq \omega(A)\leq \|A\|
\end{equation} 
and
\begin{equation}\label{eq_real_omega}
\|\mathfrak{R}A\|\leq \omega(A) \;{\text{and}}\; \|\mathfrak{I}A\|\leq \omega(A).
\end{equation} 
We refer the reader to \cite{bhumos,7,gaubook} for further information on the numerical radius, its properties, and recent advances in its inequalities.

 We begin with the following observation.

 \begin{remark}
 From Theorem \ref{00}, we have
 \[\left\| A \right\|\le \left\| \frac{M+m}{2\sqrt{Mm}}\mathfrak RA \right\|=\frac{M+m}{2\sqrt{Mm}}\left\| \mathfrak RA \right\|.\]
 That is
 \[\left\| A \right\|\le \frac{M+m}{2\sqrt{Mm}}\left\| \mathfrak RA \right\|\quad\text{ and }\quad\left\| A \right\|-\left\| \mathfrak RA \right\|\le \frac{{{\left( \sqrt{M}-\sqrt{m} \right)}^{2}}}{M+m}\left\| A \right\|.\]
Noting that for any operator $A\in\mathcal{B}(\mathcal{H})$, $\omega(A)\leq\|A\|$ and $\omega(A)\geq \|\mathfrak{R}A\|$,  we deduce that when $C_{M,m}(A)$ is accretive, one has
\begin{equation}\label{eq_w_R_1}
\omega \left( A \right)\le \frac{M+m}{2\sqrt{Mm}}\left\| \mathfrak RA \right\|\quad\text{ and }\quad\omega \left( A \right)-\left\| \mathfrak RA \right\|\le \frac{{{\left( \sqrt{M}-\sqrt{m} \right)}^{2}}}{M+m}\omega \left( A \right),
\end{equation}
 and
 \begin{equation}\label{eq_w_R_2}
 \left\| A \right\|\le \frac{M+m}{2\sqrt{Mm}}\omega \left( A \right)\quad\text{ and }\quad\left\| A \right\|-\omega \left( A \right)\le \frac{{{\left( \sqrt{M}-\sqrt{m} \right)}^{2}}}{M+m}\left\| A \right\|.
 \end{equation}

The last two inequalities in \eqref{eq_w_R_2} have been proved in \cite[Remark 35]{3}. Before proceeding, it is worth mentioning the importance of the above inequalities. We know that for any $A\in\mathcal{B}(\mathcal{H})$, $\|\mathfrak{R}A\|\leq \omega(A)$. Thus, the first inequality in \eqref{eq_w_R_1} provides a reversed version of this known inequality. Of course, this is valid when $C_{M,m}$ is accretive. Further, under this condition, the first inequality in \eqref{eq_w_R_2} provides a refinement of the well-known inequality $\|A\|\leq 2\omega(A)$ in case we have $\frac{M+m}{2\sqrt{Mm}}<2.$ Notice that this latter ratio is always not less than one. Here we give a numerical example to show that for the given matrix, the lower bound $\frac{2\sqrt{Mm}}{M+m}\|A\|$ is larger than $\frac{\|A\|}{2};$ as lower bounds of $\omega(A).$ For this, let $$A=\left[\begin{array}{cc} 5-4\textup{i}&2\textup{i}\\1+\textup{i}&6\end{array}\right], m=4, M=50.$$ Then it can be easily seen that
$$C_{M,m}(A)=\left[\begin{matrix} 27-184\textup{i}&6+92\textup{i}\\52+46\textup{i}&84 \end{matrix}\right]\;and\;\mathfrak{R} C_{M,m}(A)=\left[\begin{matrix} 54&54+46\textup{i}\\ 58-46\textup{i}&168\end{matrix}\right].$$
Since $\mathfrak{R} C_{M,m}(A)>0$, it follows that $C_{M,m}(A)$ is accretive. Now direct calculations show that
$$\frac{2\sqrt{Mm}}{M+m}\|A\|=3.56083\;and\;\frac{\|A\|}{2}=3.3991.$$ Thus, in this example, we have
$$\omega(A)\geq \frac{2\sqrt{Mm}}{M+m}\|A\|>\frac{\|A\|}{2},$$ explaining the significance of the first inequality in \eqref{eq_w_R_2}.
 \end{remark}
 In the following, we present an inequality that relates the numerical radius of $A$ with the norms of its real and imaginary parts as a reversed type of \eqref{eq_real_omega}.
 \begin{theorem}
 Let $A\in \mathcal B\left( \mathcal H \right)$.
  If ${C_{M,m}}\left( A \right)$ and  ${C_{M,m}}\left(\textup i{{A}^{*}} \right)$ are accretive, then
\[\omega \left( A \right)\le \frac{M+m}{2\sqrt{Mm}}\sqrt{\left\| \mathfrak RA \right\|\left\| \mathfrak IA \right\|}.\]
 \end{theorem}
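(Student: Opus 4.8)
The plan is to combine parts (i) and (ii) of Theorem \ref{00} in the natural way. Under the stated hypotheses, part (i) gives $|A|\le \frac{M+m}{2\sqrt{Mm}}\,\mathfrak RA$ and part (ii) gives $|A^*|\le \frac{M+m}{2\sqrt{Mm}}\,\mathfrak IA$. I would first pass to numerical radii: for any unit vector $x$, one has $|\langle Ax,x\rangle|\le \langle |A|x,x\rangle^{1/2}\langle |A^*|x,x\rangle^{1/2}$, which is the mixed Cauchy--Schwarz inequality (this appears implicitly via \eqref{1} applied to $A$ and $A^*$). Hence $|\langle Ax,x\rangle|^2\le \langle |A|x,x\rangle\,\langle |A^*|x,x\rangle$.

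Next I would bound each factor using Theorem \ref{00}: since $\mathfrak RA\ge 0$ and $\mathfrak IA\ge 0$ under the accretivity assumptions (by (5) and (6) of Proposition \ref{prop_properties_C}, the accretivity of $C_{M,m}(A)$ forces $\mathfrak RA\ge 0$, and likewise $C_{M,m}(\textup i A^*)$ accretive forces $\mathfrak IA\ge 0$), we get
\[
\langle |A|x,x\rangle\le \frac{M+m}{2\sqrt{Mm}}\langle \mathfrak RA\,x,x\rangle\le \frac{M+m}{2\sqrt{Mm}}\|\mathfrak RA\|,
\]
and similarly $\langle |A^*|x,x\rangle\le \frac{M+m}{2\sqrt{Mm}}\|\mathfrak IA\|$. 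Multiplying these two estimates and inserting them into the mixed Cauchy--Schwarz bound yields
\[
|\langle Ax,x\rangle|^2\le \left(\frac{M+m}{2\sqrt{Mm}}\right)^{2}\|\mathfrak RA\|\,\|\mathfrak IA\|.
\]
Taking the supremum over unit vectors $x$ and then square roots gives $\omega(A)\le \frac{M+m}{2\sqrt{Mm}}\sqrt{\|\mathfrak RA\|\,\|\mathfrak IA\|}$, as claimed.

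The main subtlety is justifying the mixed Cauchy--Schwarz step $|\langle Ax,x\rangle|\le \langle |A|x,x\rangle^{1/2}\langle |A^*|x,x\rangle^{1/2}$; this is standard (write $A=U|A|$ via the polar decomposition, so $\langle Ax,x\rangle=\langle |A|^{1/2}x,\,|A|^{1/2}U^*x\rangle$, apply ordinary Cauchy--Schwarz, and note $\langle |A|^{1/2}U^*x,\,|A|^{1/2}U^*x\rangle=\langle U|A|U^*x,x\rangle=\langle |A^*|x,x\rangle$), but it is the one genuinely nontrivial ingredient; everything else is bookkeeping with the already-established Theorem \ref{00}. An alternative that avoids polar decomposition is to simply use $|\langle Ax,x\rangle|\le \max\{\langle |A|x,x\rangle,\langle |A^*|x,x\rangle\}$ together with the arithmetic--geometric mean of the two one-sided bounds, but the mixed Cauchy--Schwarz route gives the cleaner geometric-mean form stated in the theorem.
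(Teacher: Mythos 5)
Your argument is correct and follows essentially the same route as the paper's proof: both start from the mixed Schwarz inequality $\left| \left\langle Ax,x \right\rangle \right|\le \sqrt{\left\langle \left| A \right|x,x \right\rangle \left\langle \left| {{A}^{*}} \right|x,x \right\rangle }$, then apply parts (i) and (ii) of Theorem \ref{00} inside the inner products and pass to the operator norms before taking the supremum over unit vectors. The only cosmetic difference is that the paper inserts the intermediate bound $\left\langle \mathfrak RAx,x \right\rangle \le \left\| \mathfrak RAx \right\|$, which you skip by going directly to $\left\| \mathfrak RA \right\|$.
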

 \begin{proof}
Let  $x\in \mathcal H$ be a unit vector. Then by the mixed Schwarz inequality \cite[pp. 75-76]{10}, and Theorem \ref{00}, we have
 \[\begin{aligned}
    \left| \left\langle Ax,x \right\rangle  \right|&\le \sqrt{\left\langle \left| A \right|x,x \right\rangle \left\langle \left| {{A}^{*}} \right|x,x \right\rangle } \\ 
  & \le \frac{M+m}{2\sqrt{Mm}}\sqrt{\left\langle \mathfrak RAx,x \right\rangle \left\langle \mathfrak IAx,x \right\rangle } \\ 
  & \le \frac{M+m}{2\sqrt{Mm}}\sqrt{\left\| \mathfrak RAx \right\|\left\| \mathfrak IAx \right\|} \\ 
  & \le \frac{M+m}{2\sqrt{Mm}}\sqrt{\left\| \mathfrak RA \right\|\left\| \mathfrak IA \right\|}.  
 \end{aligned}\]
 
Now, we get the desired result by taking supremum over all unit vectors $x\in \mathcal H$.
  \end{proof}
  In the following, we present a lower bound of the numerical radius in terms of $\|\;|A|^2+|A^*|^2\|.$ The significance of this result is explained in Remark \ref{rem_kit_1} below.
\begin{theorem}\label{01}
Let $A\in \mathcal B\left( \mathcal H \right)$. If ${C_{M,m}}\left( A \right)$ is accretive, then
\[\frac{2Mm}{{{\left( M+m \right)}^{2}}}\left\| {{\left| A \right|}^{2}}+{{\left| {{A}^{*}} \right|}^{2}} \right\|\le {{\omega }^{2}}\left( A \right).\]
\end{theorem}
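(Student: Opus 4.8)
The plan is to deduce this lower bound from the reversed norm--numerical-radius estimate \eqref{eq_w_R_2} together with the crudest possible triangle-inequality bound on $\||A|^2+|A^*|^2\|$. First I would record the elementary norm identities $\||A|^2\|=\|A^*A\|=\|A\|^2$ and $\||A^*|^2\|=\|AA^*\|=\|A^*\|^2=\|A\|^2$, so that the triangle inequality for the operator norm gives
\[
\left\| |A|^2+|A^*|^2 \right\|\le \||A|^2\|+\||A^*|^2\|=2\|A\|^2 .
\]
Hence it is enough to bound $\|A\|^2$ from above by a suitable multiple of $\omega^2(A)$.

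Next I would invoke \eqref{eq_w_R_2}, which holds precisely because $C_{M,m}(A)$ is accretive (it is derived there from Theorem \ref{00}): namely $\|A\|\le \tfrac{M+m}{2\sqrt{Mm}}\,\omega(A)$, and squaring gives $\|A\|^2\le \tfrac{(M+m)^2}{4Mm}\,\omega^2(A)$. Combining this with the previous display,
\[
\frac{2Mm}{(M+m)^2}\left\| |A|^2+|A^*|^2 \right\|
\le \frac{2Mm}{(M+m)^2}\cdot 2\|A\|^2
= \frac{4Mm}{(M+m)^2}\|A\|^2
\le \omega^2(A),
\]
which is exactly the asserted inequality. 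Equivalently, one may use Theorem \ref{05}(i) together with $\|\mathfrak R A\|\le\omega(A)$ to get $\||A|^2\|=\|A\|^2\le(\tfrac{M+m}{2\sqrt{Mm}})^2\omega^2(A)$ directly, leading to the same chain.

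There is no genuinely hard step here; the only point worth flagging is that $\||A|^2+|A^*|^2\|\le 2\|A\|^2$ is the weakest link in the argument, yet because it is paired with the sharp reversed inequality \eqref{eq_w_R_2} the product already delivers the stated constant $\tfrac{2Mm}{(M+m)^2}$. One is tempted to argue instead from \eqref{04}, i.e. $(M+m)\mathfrak R A\ge MmI+|A|^2$, but that operator inequality controls only $|A|^2$: accretivity of $C_{M,m}(A)$ does not pass to $C_{M,m}(A^*)$, so there is no matching control on $|A^*|^2$, and the norm route above is the clean one. Finally, it is worth remarking (as in Remark \ref{rem_kit_1}) that this estimate complements the known upper bound $\omega^2(A)\le\tfrac{1}{2}\||A|^2+|A^*|^2\|$, so that under the accretivity hypothesis $\||A|^2+|A^*|^2\|$ is comparable to $\omega^2(A)$ from both sides.
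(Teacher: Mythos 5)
Your proposal is correct and follows essentially the same route as the paper: the paper also reduces the left-hand side to $2\|A\|^2$ via $\||A|^2\|=\||A^*|^2\|=\|A\|^2$ and then applies the bound $\|A\|\le\tfrac{M+m}{2\sqrt{Mm}}\,\omega(A)$ coming from Theorem \ref{00}. Your extra observations (the alternative via Theorem \ref{05}(i) and the comparison with Kittaneh's upper bound) are accurate but not needed.
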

\begin{proof}
We know that
\[{{\left\| A \right\|}^{2}}={{\left\| \,\left| A \right|\, \right\|}^{2}}=\left\| {{\left| A \right|}^{2}} \right\|={{\left\| \,\left| {{A}^{*}} \right| \,\right\|}^{2}}=\left\| {{\left| {{A}^{*}} \right|}^{2}} \right\|.\]
This, together with Theorem \ref{00}, implies that
\[\left\| {{\left| A \right|}^{2}}+{{\left| {{A}^{*}} \right|}^{2}} \right\|\le \frac{{{\left( M+m \right)}^{2}}}{2Mm}{{\omega }^{2}}\left( A \right),\]
as desired.
\end{proof}

\begin{remark}\label{rem_kit_1}
If
\[Mm\ge \frac{1}{4}{{\left( M-m \right)}^{2}},\]
then, Theorem \ref{01} improves (see \cite[Theorem 1]{6}) 
\[\frac{1}{4}\left\| {{\left| A \right|}^{2}}+{{\left| {{A}^{*}} \right|}^{2}} \right\|\le {{\omega }^{2}}\left( A \right).\]
If we let $f(x)=x-\frac{(x-1)2}{4}, x\geq 1,$ we can see that $f$ is increasing on $[1,3]$ and is decreasing afterwards. Calculating, we find that $f(x)=0$ when $x=3+2\sqrt{2}$, and that $f\geq 0$ on $[1,3+2\sqrt{2}],$ while it is negative on $[3+2\sqrt{2},\infty).$ Letting $x=\frac{M}{m},$ this means that the condition $Mm\ge \frac{1}{4}{{\left( M-m \right)}^{2}}$ holds when $1<\frac{M}{m}\leq 3+2\sqrt{2}.$
\end{remark}

On the other hand, a submultiplicative inequality for the numerical radius may be shown as follows.
 \begin{corollary}\label{02}
Let $A,B\in \mathcal B\left( \mathcal H \right)$. If ${C_{M,m}}\left( A \right)$ and ${{C}_{N,n}}\left( B \right)$ are accretive, then
\[\omega \left( AB \right)\le \frac{\left( M+m \right)\left( N+n \right)}{4\sqrt{MNmn}}\omega \left( A \right)\omega \left( B \right).\]
 \end{corollary}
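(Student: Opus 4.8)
The plan is to reduce everything to the operator norm and then invoke the reversed norm--numerical radius bound already recorded in \eqref{eq_w_R_2}. First I would use the elementary estimate $\omega(AB)\le\|AB\|$, which is the right-hand inequality in \eqref{eq_omega_norm}, followed by submultiplicativity of the operator norm, $\|AB\|\le\|A\|\,\|B\|$. This isolates the two factors $\|A\|$ and $\|B\|$, each of which can now be controlled independently by the hypotheses on $C_{M,m}(A)$ and $C_{N,n}(B)$.

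Next, since $C_{M,m}(A)$ is accretive, the first inequality in \eqref{eq_w_R_2} gives $\|A\|\le \frac{M+m}{2\sqrt{Mm}}\,\omega(A)$; likewise, applying the same inequality with the pair $(N,n)$ in place of $(M,m)$ and $B$ in place of $A$ yields $\|B\|\le \frac{N+n}{2\sqrt{Nn}}\,\omega(B)$ from the accretivity of $C_{N,n}(B)$. Multiplying these two estimates and chaining with the previous step produces
\[
\omega(AB)\le \|A\|\,\|B\|\le \frac{(M+m)(N+n)}{4\sqrt{Mm}\,\sqrt{Nn}}\,\omega(A)\,\omega(B),
\]
and since $\sqrt{Mm}\,\sqrt{Nn}=\sqrt{MNmn}$, this is precisely the asserted bound.

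I do not anticipate any genuine obstacle: the corollary is a direct concatenation of \eqref{eq_omega_norm}, submultiplicativity of the norm, and the reversed estimate \eqref{eq_w_R_2}. The only point requiring a moment's care is the bookkeeping with the two independent scaling pairs $(M,m)$ and $(N,n)$, ensuring the final constant is assembled as $\frac{(M+m)(N+n)}{4\sqrt{MNmn}}$; everything else is routine.
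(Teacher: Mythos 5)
Your argument is correct and coincides with the paper's own proof: both pass from $\omega(AB)$ to $\|AB\|\le\|A\|\|B\|$ and then apply the first inequality of \eqref{eq_w_R_2} to each factor with its respective pair $(M,m)$ and $(N,n)$. No gaps.
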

\begin{proof}
We have
\[\begin{aligned}
   \omega \left( AB \right)&\le \left\| AB \right\| \\ 
 & \le \left\| A \right\|\left\| B \right\| \\ 
 & \le \frac{\left( M+m \right)\left( N+n \right)}{4\sqrt{MNmn}}\omega \left( A \right)\omega \left( B \right),
\end{aligned}\]
where we have used \eqref{eq_w_R_2} to obtain the last inequality. This completes the proof.
\end{proof}

\begin{remark}
If
\[{{\left( \sqrt{MN}-\sqrt{mn} \right)}^{2}}+{{\left( \sqrt{Mn}-\sqrt{Nm} \right)}^{2}}\le 12\sqrt{MNmn},\]
then, Corollary \ref{02} refines (see \cite[Theorem 2.5-2]{7})  
\[\omega \left( AB \right)\le 4\omega \left( A \right)\omega \left( B \right).\]
\end{remark}
It is well known that when $A,B\in\mathcal{B}(\mathcal{H})$ then $\omega(AB-BA^*)\leq 2\|A\|\omega(B),$ \cite{11}. In the following, we present a refinement of this inequality when $C_{M,m}$ is accretive.
\begin{theorem}\label{6}
Let $A,B\in \mathcal B\left( \mathcal H \right)$. 
\begin{itemize}
\item[(i)] If ${C_{M,m}}\left( A \right)$ is accretive, then
\[\omega \left( AB-B{{A}^{*}} \right)\le \left( M-m \right)\omega \left( B \right).\]
\item[(ii)] If ${C_{M,m}}\left( \textup i{{A}} \right)$  is accretive, then
\[\omega \left( AB+{{B}^{*}}A \right)\le \left( M-m \right)\omega \left( B \right).\]
\end{itemize}
\end{theorem}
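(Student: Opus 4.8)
The plan is to derive both inequalities from the known commutator estimate $\omega(XB-BX^{*})\le 2\|X\|\,\omega(B)$ of \cite{11}, applied not to $A$ itself but to a translate $A-\lambda I$ with $\lambda$ real. The accretivity hypothesis is exactly what makes such a translate small in norm, while translating $A$ by a real scalar leaves the quantity $AB-BA^{*}$ unchanged.

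For part (i): since $C_{M,m}(A)$ is accretive we have $\mathfrak R C_{M,m}(A)\ge 0$, so the identity \eqref{5} gives
\[
\Big|A-\tfrac{M+m}{2}I\Big|^{2}=\Big(\tfrac{M-m}{2}\Big)^{2}I-\mathfrak R C_{M,m}(A)\le\Big(\tfrac{M-m}{2}\Big)^{2}I,
\]
hence $\big\|A-\tfrac{M+m}{2}I\big\|\le\tfrac{M-m}{2}$. Writing $A_{0}=A-\tfrac{M+m}{2}I$ and using that $\tfrac{M+m}{2}\in\mathbb{R}$, so $A_{0}^{*}=A^{*}-\tfrac{M+m}{2}I$, the scalar terms cancel in $A_{0}B-BA_{0}^{*}$ and we get $A_{0}B-BA_{0}^{*}=AB-BA^{*}$. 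Applying \cite{11} to $A_{0}$ and $B$ then yields
\[
\omega(AB-BA^{*})=\omega(A_{0}B-BA_{0}^{*})\le 2\|A_{0}\|\,\omega(B)\le (M-m)\,\omega(B).
\]

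For part (ii): I would apply part (i) with $A$ replaced by $\textup i A$, which is permissible since the hypothesis is precisely that $C_{M,m}(\textup i A)$ is accretive. This gives $\omega\big((\textup i A)B-B(\textup i A)^{*}\big)\le (M-m)\,\omega(B)$, and since $(\textup i A)^{*}=-\textup i A^{*}$ one has $(\textup i A)B-B(\textup i A)^{*}=\textup i\,(AB+BA^{*})$; because $\omega(\textup i\,T)=\omega(T)$, this is the asserted inequality $\omega(AB+BA^{*})\le(M-m)\,\omega(B)$.

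The argument is essentially a reduction to \cite{11}, so I do not anticipate a genuine obstacle; the crux is recognizing the right move, namely to translate $A$ by the center $\tfrac{M+m}{2}$ of the disc into which $\|A-\tfrac{M+m}{2}I\|\le\tfrac{M-m}{2}$ confines $A$, and to use that this scalar is real so that the cross terms in $A_{0}B-BA_{0}^{*}$ cancel (an imaginary shift would leave an uncancelled term and break the argument). The only computation needed is the verification of \eqref{5}, which is a direct expansion of $(MI-A^{*})(A-mI)$; and it is worth remarking that the hypothesis in (ii) is substantive, since accretivity of $C_{M,m}(\textup i A)$ forces the numerical range of $\textup i A$ into a disc about the positive real axis.
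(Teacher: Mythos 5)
Your part (i) is correct and is essentially the paper's own argument: both translate $A$ by the real scalar $\frac{M+m}{2}$, use the identity $\mathfrak R C_{M,m}(A)+\left|A-\frac{M+m}{2}I\right|^{2}=\left(\frac{M-m}{2}\right)^{2}I$ together with accretivity to get $\left\|A-\frac{M+m}{2}I\right\|\le\frac{M-m}{2}$, and then apply the inequality $\omega(XB-BX^{*})\le 2\|X\|\omega(B)$ of \cite{11} to the translate, the point being exactly the one you make: a real shift leaves $AB-BA^{*}$ unchanged. For part (ii) the paper only says the proof is similar, and your substitution $A\mapsto\textup{i}A$ is the natural way to make that precise.

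One caveat on part (ii): what your substitution actually yields is $\omega(AB+BA^{*})\le(M-m)\,\omega(B)$, whereas the printed statement (and the corollary that follows it) reads $\omega(AB+B^{*}A)$. These are different operators in general, and the translation trick does not apply to $AB+B^{*}A$, since $(A-\lambda I)B+B^{*}(A-\lambda I)=AB+B^{*}A-2\lambda\,\mathfrak{R}B$ and the scalar terms do not cancel. Everything in the paper --- the cited inequality of \cite{11}, the $A\mapsto\textup{i}A$ substitution, and the decomposition $AB=\frac{1}{2}\left((AB+BA^{*})+(AB-BA^{*})\right)$ needed for the final corollary --- points to $BA^{*}$ being the intended expression, so this is almost certainly a typo in the statement; but you should say explicitly that you are proving the corrected form rather than asserting that $\omega(AB+BA^{*})\le(M-m)\,\omega(B)$ ``is the asserted inequality.''
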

\begin{proof}
By the inequality (see \cite{11})
\[\omega \left( AB-B{{A}^{*}} \right)\le 2\left\| A \right\|\omega \left( B \right),\]
and the relation \eqref{5}, we can write
\[\begin{aligned}
   \omega \left( AB-B{{A}^{*}} \right)&=\omega \left( \left( A-\frac{M+m}{2}{I} \right)B-B\left( {{A}^{*}}-\frac{M+m}{2}{I} \right) \right) \\ 
 & \le 2\left\| {{A}^{*}}-\frac{M+m}{2}{I} \right\|\omega \left( B \right) \\ 
 & =2\left\| A-\frac{M+m}{2}{I} \right\|\omega \left( B \right) \\ 
 & \le \left( M-m \right)\omega \left( B \right),
\end{aligned}\]
as desired.

The inequality in part (ii) can be shown similarly, so we omit the details.
\end{proof}
In the following, we give an example to show how Theorem \ref{6} improves the inequality $\omega(AB-BA^*)\leq 2\|A\|\omega(B).$
\begin{example}
Let $A=\left[ \begin{matrix}
   2 & 0  \\
   -1 & 4  \\
\end{matrix} \right]$, $M=8$, and $m=0.01$. A simple calculation shows that
	\[\mathfrak R{C_{M,m}}\left( A \right)=\left[ \begin{matrix}
   \frac{551}{50} & -\frac{1601}{200}  \\
   -\frac{1601}{200} & \frac{399}{25}  \\
\end{matrix} \right]>0.\]
In this case
	\[2\left\| A \right\|\approx 8.31\]
while
	\[M-m=7.99.\]
These values imply that Theorem \ref{6} improves the inequality
	\[\omega \left( AB-B{{A}^{*}} \right)\le 2\left\| A \right\|\omega \left( B \right).\]
\end{example}
We conclude with the following result.
\begin{corollary}
Let $A,B\in \mathcal B\left( \mathcal H \right)$. 
If ${C_{M,m}}\left( A \right)$  and ${C_{M,m}}\left( \textup i{{A}} \right)$  are accretive, then

\[\omega \left( AB \right)+\frac{1}{2}\left| \omega \left( AB+{{B}^{*}}A \right)-\omega \left( AB-{{B}^{*}}A \right) \right|\le \left( M-m \right)\omega \left( B \right).\]
\end{corollary}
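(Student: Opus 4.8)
The plan is to deduce the inequality from the two bounds in Theorem~\ref{6}, packaged through the elementary identity $\max\{a,b\}=\tfrac12\left(a+b+|a-b|\right)$, exactly as in the proof of Theorem~\ref{thm_block_1}.

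First I would set $p=\omega\!\left(AB+B^{*}A\right)$ and $q=\omega\!\left(AB-B^{*}A\right)$. Since $C_{M,m}(A)$ is accretive, Theorem~\ref{6}(i) gives $q\le (M-m)\,\omega(B)$, and since $C_{M,m}(\textup i A)$ is accretive, Theorem~\ref{6}(ii) gives $p\le (M-m)\,\omega(B)$; in particular $\max\{p,q\}\le (M-m)\,\omega(B)$.

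Next I would exploit the decomposition $2AB=\left(AB+B^{*}A\right)+\left(AB-B^{*}A\right)$, so that subadditivity of the numerical radius gives $\omega(AB)\le\tfrac12(p+q)$. Rewriting the middle term of the target expression via $\tfrac12|p-q|=\max\{p,q\}-\tfrac12(p+q)$, one gets
\[
\omega(AB)+\tfrac12\left|\,\omega(AB+B^{*}A)-\omega(AB-B^{*}A)\,\right|=\left(\omega(AB)-\tfrac{p+q}{2}\right)+\max\{p,q\}.
\]
The first summand on the right is nonpositive, so the left-hand side is at most $\max\{p,q\}$, and combining with $\max\{p,q\}\le (M-m)\,\omega(B)$ from the previous paragraph finishes the proof.

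I do not expect a genuine obstacle: the argument is a routine recombination of Theorem~\ref{6} in the spirit of Theorem~\ref{thm_block_1}. The two points that need care are (a) confirming that each of the quantities $\omega(AB\pm B^{*}A)$ is indeed controlled by the corresponding part of Theorem~\ref{6} under the stated accretivity of $C_{M,m}(A)$ and $C_{M,m}(\textup i A)$, and (b) the sign bookkeeping in the displayed identity, namely that $\omega(AB)-\tfrac12(p+q)\le 0$, which is what allows that term to be discarded rather than estimated.
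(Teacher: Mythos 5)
Your argument is essentially the paper's: the paper likewise uses $2AB=(AB+B^{*}A)+(AB-B^{*}A)$ together with subadditivity to get $\omega(AB)\le\tfrac12(p+q)$, bounds the left-hand side by $\tfrac12\left(p+q+|p-q|\right)=\max\{p,q\}$, and then applies the $(M-m)\,\omega(B)$ bound. The one point to fix is exactly your caveat (a): Theorem \ref{6}(i) controls $\omega(AB-BA^{*})$, not $\omega(AB-B^{*}A)$, so the minus-sign estimate should instead be drawn from part (ii) (for instance by replacing $B$ with $\textup{i}B$, which changes $AB+B^{*}A$ into $\textup{i}(AB-B^{*}A)$ and leaves $\omega(B)$ unchanged); the paper itself obtains both signs from the substitution involving $\textup{i}A$ in Theorem \ref{6} rather than from part (i).
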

\begin{proof}
If we replace $A$ by $\textup{i}A$, in Theorem \ref{6}, we get
\[\omega \left( AB\pm {{B}^{*}}A \right)\le \left( M-m \right)\omega \left( B \right).\]
This implies,
\[\begin{aligned}
  & \omega \left( AB \right)+\frac{1}{2}\left| \omega \left( AB+{{B}^{*}}A \right)-\omega \left( AB-{{B}^{*}}A \right) \right| \\ 
 & \le \frac{1}{2}\left( \omega \left( AB+{{B}^{*}}A \right)+\omega \left( AB-{{B}^{*}}A \right)+\left| \omega \left( AB+{{B}^{*}}A \right)-\omega \left( AB-{{B}^{*}}A \right) \right| \right) \\ 
 & =\max \left\{ \omega \left( AB+{{B}^{*}}A \right),\omega \left( AB-{{B}^{*}}A \right) \right\} \\ 
 & \le \left( M-m \right)\omega \left( B \right),
\end{aligned}\]
as desired.
\end{proof}

\section*{Declaration}
The authors declare that they have no competing interests. 
\section*{Data availability}
No data sets are associated with this work.

{\tiny (I. H. G\"um\"u\c s) Department of Mathematics, Faculty of Arts and Sciences, Ad\i yaman University, Ad\i yaman, Turkey}

{\tiny \textit{E-mail address:} igumus@adiyaman.edu.tr}

\vskip 0.3 true cm

{\tiny (H. R. Moradi) Department of Mathematics, Mashhad Branch, Islamic Azad University, Mashhad, Iran
	
	\textit{E-mail address:} hrmoradi@mshdiau.ac.ir}

{\tiny (M. Sababheh) Vice president, Princess Sumaya University for Technology, Amman, Jordan

\textit{E-mail address:} sababheh@psut.edu.jo; sababheh@yahoo.com}


\begin{thebibliography}{9}

\bibitem{bed_pos}
Y. Bedrani, F. Kittaneh, and M. Sababheh, {\it From positive to accretive matrices}, Positivity., 25 (2021), 1601--1629.

\bibitem{bed_results}
Y. Bedrani, F. Kittaneh, and M. Sababheh, {\it Accretive matrices and matrix convex functions}, Results Math., 77, 52 (2022). https://doi.org/10.1007/s00025-021-01590-4

\bibitem{bhatia_matrix}
R. Bhatia, {\it Matrix analysis}, Springer-Verlag, New York, 1997.


\bibitem{5}
R. Bhatia, {\it Positive definite matrices}, Princeton Univ. Press, Princeton, 2007.
\bibitem{9}
R. Bhatia and F. Kittaneh, {\it Notes on matrix arithmetic-geometric mean inequalities}, Linear Algebra Appl., 308 (2000), 203--211.

\bibitem{bhumos} 
P. Bhunia, S. S. Dragomir, M. S. Moslehian, and K. Paul, {\it Lectures on numerical radius inequalities},  Infosys Science Foundation Series in Mathematical Sciences, Springer Cham, 2022.

\bibitem{choi}
 M. D. Choi, {\it A Schwarz inequality for positive linear maps on $C^*-$algebras}, Illinois J. Math., 18 (1974), 565--574.
 
\bibitem{davis}
C. Davis, {\it A Schwartz inequality for convex operator functions}, Proc. Amer. Math. Soc., 8 (1957), 42--44.

\bibitem{1}
S. S. Dragomir, {\it New inequalities of the Kantorovich type for bounded
linear operators in Hilbert spaces}, Linear Algebra Appl., 428 (2008), 2750--2760.

\bibitem{3}
S. S. Dragomir, {\it Inequalities for the numerical radius of linear operators in Hilbert spaces}, Springer Briefs in Mathematics. Springer, Cham (2013).

\bibitem{Drury1}
S. Drury and M. Lin, {\it Singular value inequalities for matrices with numerical ranges in a sector}, Oper. Matrices., 8 (2014), 1143--1148.

\bibitem{drury}
S. Drury, {\it Principal powers of matrices with positive definite real part}, Linear Multilinear Algebra., 63 (2015), 296--301.


\bibitem{11}
C.-K. Fong and J. A. R. Holbrook, {\it Unitarily invariant operator norms}, Can. J. Math., 35 (1983), 274--299.

\bibitem{FMPS}
 T. Furuta, J. Mi\'{c}i\'{c} Hot, J. Pe\v{c}ari\'{c}, and Y. Seo, \textit{Mond-Pe\v{c}ari\'{c} method in operator inequalities}, Monographs in Inequalities 1, Element, Zagreb, 2005.

\bibitem{7}
K. E. Gustafson and D. K. M. Rao, {\it Numerical range}, Springer, New York, 1997.

\bibitem{10}
P . R. Halmos, {\it A Hilbert space problem book}, 2nd ed., Springer, New York, 1982.

\bibitem{4}
R. A. Horn, C. R. Johnson, {\it Topics in matrix analysis}, Cambridge University Press, 1991.


\bibitem{6}
F. Kittaneh, {\it Numerical radius inequalities for Hilbert space operators}, Studia Math., 168 (2005), 73--80 

\bibitem{Lin 1}
M. Lin, {\it Some inequalities for sector matrices}. Oper. Matrices., 10 (2016), 915--921.


\bibitem{8}
H. R. Moradi, M. E. Omidvar, I. H. G\"um\"u\c s, and R. Naseri, {\it A note on some inequalities for positive linear maps}, Linear Multilinear Algebra., 66(7) (2018), 1449--1460.

\bibitem{2}
M. Niezgoda, {\it Accretive operators and Cassels inequality}, Linear Algebra Appl., 433 (2010), 136--142.


\bibitem{raissouli}
 M. Ra\"{i}ssouli, M. S. Moslehian, and S. Furuichi, {\it Relative entropy and Tsallis entropy of two accretive operators}, C. R. Acad. Sci. Paris Ser. I, 355 (2017), 687--693.

\bibitem{gaubook} 
P.Y. Wu and H.-L. Gau, {\it Numerical ranges of Hilbert space operators}, Encyclopedia of Mathematics and its Applications, 179. Cambridge University Press, Cambridge, 2021.  


\end{thebibliography}
\end{document}